\documentclass[twoside,12pt,reqno]{amsart}
\usepackage[usenames,dvipsnames]{color}
\usepackage{amscd, bbm, amsaddr, mathtools}
\usepackage{amssymb,mathabx}
\usepackage{todonotes}
\usepackage{mathrsfs,wasysym, tikz, tikz-cd, extpfeil}
\usetikzlibrary{matrix, arrows}
\usepackage{amsmath}
\usepackage{mathrsfs}
 \usepackage[margin= 30mm]{geometry}
\usepackage{hyperref}

\hypersetup{colorlinks=true, linkcolor = Blue, citecolor=Purple}

\makeatletter
\raggedbottom

\@addtoreset{equation}{section}

\setcounter{tocdepth}{2}

\numberwithin{equation}{section}

\newtheorem{Theorem}{Theorem}[section]
\newtheorem*{Theorem*}{Theorem}
\newtheorem*{Corollary*}{Corollary}

\newtheorem{Lemma}[Theorem]{Lemma}
\newtheorem{Proposition}[Theorem]{Proposition}
\newtheorem{Corollary}[Theorem]{Corollary}

\theoremstyle{definition}

\theoremstyle{remark}
\newtheorem{Remark}[Theorem]{Remark}

\newtheorem*{Remark*}{Remark}

\newtheorem*{proofofKW1}{Proof of Main Theorem}

\newtheorem{Example}[Theorem]{Example}

\newbox\squ  
\setbox\squ=\hbox{\vrule width.6pt
   \vbox{\hrule height.6pt width.4em\kern1ex
         \hrule height.6pt}%
   \vrule width.6pt}

\newcommand{\C}{\mathbb{C}}
\newcommand{\N}{\mathbb{N}}
\newcommand{\Z}{\mathbb{Z}}


\newcommand{\g}{\mathfrak{g}}
\newcommand{\gl}{\mathfrak{gl}}
\renewcommand{\b}{\mathfrak{b}}
\newcommand{\h}{\mathfrak{h}}

\newcommand{\p}{\mathfrak{p}}
\newcommand{\s}{\mathfrak{s}}


\renewcommand{\O}{\mathcal{O}}



\newcommand{\ad}{\operatorname{ad}}

\newcommand{\Lie}{\operatorname{Lie}}

\newcommand{\Hom}{\operatorname{Hom}}
\newcommand{\GL}{\operatorname{GL}}

\newcommand{\gr}{\operatorname{gr}}

\newcommand{\Spec}{\operatorname{Spec}}

\newcommand{\reg}{{\operatorname{reg}}}

\newcommand{\End}{\operatorname{End}}

\newcommand{\Mat}{\operatorname{Mat}}

\newcommand{\Ind}{\operatorname{Ind}}

\newcommand{\Der}{\operatorname{Der}}

\renewcommand{\mod}{\operatorname{-mod}}

\newcommand{\Frac}{\operatorname{Frac}}

\newcommand{\ind}{\operatorname{ind}}

\newcommand{\Char}{{\operatorname{char}}}
\newcommand{\alg}{\operatorname{-alg}}



\newcommand{\bs}{\bar \s}




\newcommand{\Lr}{\mathcal{L}_{\operatorname{ring}}}


\title[]{\boldmath  A proof of the first Kac--Weisfeiler conjecture \\ in large characteristics}

\author{Benjamin Martin, David Stewart and Lewis Topley \medskip \newline
with an appendix by Akaki Tikaradze}


\subjclass[2010]{Primary: 17B50, Secondary: 17B10, 17B35, 03C60.}

\begin{document}

\maketitle

\begin{abstract}
In 1971, Kac and Weisfeiler made two influential conjectures describing the dimensions of simple modules of a restricted Lie algebra $\g$. The first predicts the maximal dimension of simple $\g$-modules and in this paper we apply the Lefschetz Principle and classical techniques from Lie theory to prove this conjecture for all restricted Lie subalgebras of $\gl_n(k)$ whenever $k$ is an algebraically closed field of characteristic $p \gg 0$. As a consequence we deduce that the conjecture holds for the the Lie algebra of a group scheme over any commutative ring, after specialising to an algebraically closed field of almost any characteristic.

In the appendix to this paper, written by Akaki Tikaradze, an alternative, short proof of the first Kac--Weisfeiler conjecture is given for the Lie algebra of group scheme over a finitely generated ring $R \subseteq \C$, after base change to a field of large positive characteristic.
\end{abstract}

\section{Introduction}

Since the pioneering work of Zassenhaus \cite{Zas54}, it has been known that the dimensions of simple modules of finite-dimensional Lie algebras over a field $k$ of characteristic $p > 0$ are bounded, and that the maximal dimension, which we denote $M(\g)$, is a power of $p$. Jacobson introduced the notion of a restricted Lie algebra with a view to developing a Galois theory for purely inseparable field extensions \cite{Jac37}. Very briefly, restricted Lie algebras are those that admit a $p$-power map $x \mapsto x^{[p]}$ satisfying axioms which are modelled on the properties of the map $\Der_k(A)  \rightarrow  \Der_k(A)$ given by $d  \mapsto  d^p$, where $A$ is an associative $k$-algebra. Many of the modular Lie algebras arising in nature are restricted: for example, when $\g$ is the Lie algebra of an algebraic group scheme $G$ of finite type over $k$ then there is a natural $G$-equivariant restricted structure on $\g$ \cite[I.7.10]{Jan03}.

Now let $k$ be algebraically closed. In \cite{KW71} Kac and Weisfeiler carried out the first intensive study of representations of restricted Lie algebras. The key property of the restricted structure on $\g$ is that the elements $x^p - x^{[p]}$ are central in $U(\g)$ for $x \in \g$, and the subalgebra $Z_p(\g) \subseteq U(\g)$ generated by these elements is known as the \emph{$p$-centre}. One of the fundamental insights of \cite{KW71} is that the maximal ideals of $Z_p(\g)$ are parametrised by $\g^*$. Since the universal enveloping algebra $U(\g)$ is finite over its $p$-centre it follows from Hilbert's Nullstellensatz that every simple $\g$-module is annihilated by a unique maximal ideal of $Z_p(\g)$, and so to every simple $\g$-module $M$ we may assign some linear form $\chi \in \g^*$ known as the {\it $p$-character of $M$}. This situation is reminiscent of Kirillov's orbit method, and so it is natural to hope that global properties of the module category $\g\mod$ will be controlled by geometric properties of the module $\g^*$. These aspirations were formalised by Kac--Weisfeiler in the form of two conjectures: the first of these predicts the maximal dimension of simple $\g$-modules, and in the current paper we apply techniques from model theory to confirm that conjecture for all restricted Lie subalgebras of $\gl_n(k)$ when the characteristic of the field $k$ is large. The second conjecture proposes lower bounds on powers of $p$ dividing the dimensions of $\g$-modules with $p$-character $\chi$; for more detail see \cite{PS99} and the references therein.

The coadjoint stabiliser of $\chi \in \g^*$ is denoted $\g^\chi$ and the {\it index of $\g$} is defined by
\begin{eqnarray}
\ind(\g) := \min_{\chi \in \g^*} \dim \g^\chi.
\end{eqnarray}
The first Kac--Weisfeiler conjecture (KW1) predicts that when $\g$ is any restricted Lie algebra the maximal dimension of simple $\g$-modules is
\begin{eqnarray}
\label{e:stateKW1}
M(\g) = p^{\frac{1}{2}(\dim \g - \ind \g)}.
\end{eqnarray}

\begin{Theorem}
\label{mainthm}
For all $d \in \N$ there exists $p_0 \in \N$ such that if $k = \overline{k}$ is a field of characteristic $p > p_0$ and $\g \subseteq \gl_d(k)$ is a restricted Lie subalgebra, then the first Kac--Weisfeiler conjecture holds for $\g$.
\end{Theorem}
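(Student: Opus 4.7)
The plan is to reduce the first Kac--Weisfeiler conjecture to a first-order statement about the existence of solvable polarisations, prove that statement over $\C$ by classical Lie-theoretic methods, and transfer to characteristic $p \gg 0$ via the Lefschetz principle. The upper bound $M(\g) \le p^{(\dim \g - \ind \g)/2}$ is classical (due to Kac--Weisfeiler, refined by Premet), so it suffices to produce a single simple $\g$-module of that dimension. The standard recipe is: choose a regular $\chi \in \g^*$ (so $\dim \g^\chi = \ind \g$) and a \emph{solvable polarisation} $\p \subseteq \g$ at $\chi$, meaning a solvable Lie subalgebra of dimension $\tfrac{1}{2}(\dim \g + \ind \g)$ on which $\chi([\p,\p]) = 0$. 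The reduced induced module $U_\chi(\g)\otimes_{U_\chi(\p)} k_\chi$ then has dimension $p^{(\dim \g - \ind \g)/2}$, and by the upper bound any simple quotient realises $M(\g)$.

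The set of restricted Lie subalgebras $\g \subseteq \gl_d(k)$ of a given dimension $m$ is cut out inside the Grassmannian $\mathrm{Gr}(m,d^2)$ by polynomial conditions (bracket-closure together with closure under the $[p]$-map). For each triple $(d,m,r)$ with $r \le m \le d^2$, the assertion ``for every restricted Lie subalgebra $\g \subseteq \gl_d$ with $\dim \g = m$ and $\ind \g = r$ there exist a regular $\chi \in \g^*$ and a solvable polarisation at $\chi$'' is a first-order sentence $\varphi_{d,m,r}$ in the language of rings which does not mention the characteristic: solvability of $\p$ is witnessed by the vanishing of a derived series of bounded length, the polarisation condition is polynomial in $(\chi,\p)$, and regularity of $\chi$ is an open condition on $\g^*$. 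If each $\varphi_{d,m,r}$ holds in every algebraically closed field of characteristic zero, the Lefschetz principle (compactness for first-order logic) yields a common $p_0 = p_0(d)$ beyond which all of them hold over algebraically closed fields of characteristic $p > p_0$.

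It thus remains to prove over $\C$ that every Lie subalgebra $\g \subseteq \gl_d(\C)$ admits a solvable polarisation at some regular $\chi$. Here one exploits classical Lie theory: pass to the algebraic hull $\bar\g \subseteq \gl_d(\C)$, apply the Levi decomposition $\bar\g = \l \ltimes \mathrm{rad}(\bar\g)$, and combine the existence of Borel subalgebras in the reductive part $\l$ with the existence (Dixmier--Vergne style) of polarisations at generic characters of the solvable radical. These ingredients are then assembled and intersected with $\g$ to yield a solvable subalgebra of the desired dimension at a $\chi$ chosen in the open dense locus of regular forms.

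The crux of the argument is this characteristic-zero construction: one must ensure that the polarisation lies inside $\g$ itself, not merely inside the algebraic hull, and has exactly the correct dimension at a \emph{regular} $\chi$; this requires careful control of how $\g$ sits inside $\bar\g$ and of the relationship between their coadjoint orbits. The hypothesis $\g \subseteq \gl_d$ is essential: it bounds the complexity of the parameter space uniformly in $\g$, which is precisely what allows the Lefschetz transfer to produce a single bound $p_0 = p_0(d)$ valid for all $\g$ of the stated form.
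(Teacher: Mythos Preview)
Your proposal has the direction of the known inequality backwards, and this inverts the whole argument. Premet--Skryabin \cite{PS99} establish the \emph{lower} bound $M(\g)\ge p^{\frac12(\dim\g-\ind\g)}$ for every restricted $\g$; the \emph{upper} bound is what must be proved. Your recipe ``form the induced module $U_\chi(\g)\otimes_{U_\chi(\p)}k_\chi$ and take a simple quotient'' does not produce a simple module of dimension $p^{\frac12(\dim\g-\ind\g)}$, since a simple quotient can have any smaller dimension; so it cannot establish a lower bound. The correct use of the induced module runs the other way: if $\p$ is a solvable weak polarisation at $\chi$ and every simple $U_\chi(\p)$-module is one-dimensional, then any simple $U_\chi(\g)$-module $M$ contains a one-dimensional $U_\chi(\p)$-submodule in its socle, hence is a \emph{quotient} of the induced module, giving $\dim M\le p^{\frac12(\dim\g-\ind\g)}$. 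This is how the paper obtains the upper bound. Note that for this to bound $M(\g)$ you must either find a solvable weak polarisation at \emph{every} $\chi$ (which is what the paper does), or invoke \cite[Prop.~4.2]{PS99} to know $M(\g)$ is realised by some $\chi$ lying in the regular locus.

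Two further points. First, the characteristic-zero input is much simpler than you suggest: the existence of a solvable subalgebra of dimension $\tfrac12(\dim\g+\ind\g)$ subordinate to any $\chi$ is a standard fact for \emph{arbitrary} finite-dimensional complex Lie algebras \cite[Cor.~1.12.17]{Dix96}, and Lie's theorem gives upper-triangularisability in $\gl_d$. No Levi decomposition, Borel subalgebras, or algebraic hulls are needed, and the worry about ``the polarisation lying inside $\g$ rather than its algebraic hull'' does not arise. Second, a solvable polarisation $\p$ need not be a restricted subalgebra, so $U_\chi(\p)$ is not a priori defined inside $U_\chi(\g)$; the paper handles this by passing to the restricted closure $\bar\p$, showing $[\bar\p,\bar\p]=[\p,\p]$ and that $[\bar\p,\bar\p]$ is $p$-nilpotent (using $p>d$), which forces all simple $U_\chi(\bar\p)$-modules to be one-dimensional. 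With these corrections your outline becomes essentially the paper's proof.
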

In future work we shall provide an explicit bound on $p_0$ in the theorem. We now outline the proof of the result stated. In \cite{PS99} Premet and Skryabin studied deformations of reduced enveloping algebras to spectacular effect: one of their many results states that $M(\g) \ge p^{\frac{1}{2}(\dim\g - \ind \g)}$ for any restricted Lie algebra $\g$ and so it remains to prove the opposite inequality. 
In Kirillov's thesis he introduced the notion of a {\it polarisation of a linear form $\chi$}, which is a Lie subalgebra $\s\subseteq \g$ satisfying $\chi[\s,\s] =0$ and $\dim \s = \frac{1}{2}(\dim \g + \dim \g^\chi)$. These turn out to be central to the classification of primitive ideals in enveloping algebras of complex solvable Lie algebras \cite[Ch.\ 6]{Dix96}, as well as the classification of simple modules over restricted solvable Lie algebras \cite[\textsection 2]{KW71}. We say that $\s$ is a {\it weak polarisation of $\chi \in \g^*$} if $\s \subseteq \g$ is a Lie subalgebra of dimension $\frac{1}{2}(\dim \g + \ind \g)$ satisfying $\chi[\s,\s] = 0$. Solvable weak polarisations are known to exist for every linear form on every finite-dimensional complex Lie algebra, and we deduce that the same holds for all modular Lie algebras in large characteristics.  To do this we apply a first-order version of the Lefschetz Principle.  The condition for a solvable weak polarisation to exist involves polynomials in the structure constants of $\g$; we need to express this condition in terms of first-order sentences, and the key idea is to quantify over all Lie algebras of a fixed dimension simultaneously.
The proof concludes by observing that every simple module is a quotient of a module induced from a restricted Lie subalgebra containing a solvable polarisation (Theorem~\ref{C:chchchchchangesagain}). This places the required upper bound on the dimension of simple modules. After providing an elementary introduction to the Lefschetz Principle and the representation theory of restricted Lie algebras in \textsection \ref{s:prelims}, we give the proof of Theorem~\ref{mainthm} in \textsection \ref{S:KW1section}.

It is worth comparing the proof sketched above to the situation for Lie algebras of reductive groups. When $\g$ is such a Lie algebra it is known that for every $\chi \in \g^*$ there exists a Borel subalgebra $\b \subseteq \g$ such that $\chi[\b, \b] = 0$, $\dim \b = \frac{1}{2}(\dim \g + \ind \g)$ and $[\b, \b]$ is unipotent. It follows quickly that every simple $\g$-module of $p$-character $\chi$ is a quotient of some baby Verma module. These are defined to be the $U_\chi(\g)$-modules induced from one-dimensional $U_\chi(\b)$-modules. Hence the Borel subalgebras play the role of solvable weak polarisations in the reductive case.

Let $R$ be a commutative unital ring and say that $k$ is an $R$-field if $k$ is a field with an $R$-algebra structure. If $G$ is an $R$-group scheme and $k$ is an $R$-field then we write $G_k$ for the base change of $G$ from $R$ to $k$.  In our next theorem we describe a fruitful source of examples to which our main theorem can be applied; note that we do not need the group schemes $G_k$ to be reduced.
\begin{Theorem}
\label{thm:groupthm}
Let $G$ be an affine group scheme of finite type over $R$. There exists $p_0 \in \N$ such that when $p > p_0$ is prime and $k = \overline{k}$ is an $R$-field of characteristic  $p$, the first Kac--Weisfeiler conjecture holds for the Lie algebra $\Lie(G_k)$. 
\end{Theorem}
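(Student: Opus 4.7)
The plan is to deduce Theorem~\ref{thm:groupthm} from Theorem~\ref{mainthm} by constructing, uniformly in the $R$-field $k$, a restricted Lie algebra embedding $\Lie(G_k) \hookrightarrow \gl_d(k)$ in which the integer $d$ depends only on the pair $(R, G)$ and not on $k$. Once this is achieved, the $p_0$ produced by Theorem~\ref{mainthm} for that $d$ works uniformly for the family $\{\Lie(G_k)\}_k$.

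First I would descend to a Noetherian base. A standard limit argument for affine schemes of finite presentation produces a finitely generated $\Z$-subalgebra $R_0 \subseteq R$ and an affine $R_0$-group scheme $G_0$ of finite type with $G \cong G_0 \times_{R_0} R$; as every $R$-field is an $R_0$-field and the Lie algebra commutes with base change, one may replace $(R, G)$ by $(R_0, G_0)$ and assume $R$ is Noetherian.

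Next I would build a uniform faithful representation. Fix algebra generators $x_1, \dots, x_n$ of $R[G]$ over $R$. The standard Hopf-algebraic fact that, over a Noetherian base, any finite subset of a comodule lies in a finitely generated subcomodule (one iteratively enlarges by the values of the comultiplication $\Delta$, terminating by Noetherianity) furnishes a finitely generated $G$-stable $R$-submodule $V \subseteq R[G]$ containing $\{x_1, \dots, x_n\}$. For each $R$-field $k$, let $\bar V_k \subseteq k[G_k]$ be the image of $V \otimes_R k$; this is a $G_k$-stable $k$-subspace containing the $x_i$, with $\dim_k \bar V_k \le \mu_R(V) =: d$, a bound independent of $k$. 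The translation coaction defines a morphism of $k$-group schemes $\rho_k: G_k \to \GL(\bar V_k)$; because each $x_i$ is realised as its own matrix coefficient $c_{x_i,\epsilon}$ via the counit, the comorphism $\rho_k^*: k[\GL(\bar V_k)] \to k[G_k]$ hits the algebra generators of $k[G_k]$, so $\rho_k$ is a closed immersion.

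Closed immersions of $k$-group schemes induce injections on Lie algebras (the surjection of conormal spaces dualises to an injection of tangent spaces), and by functoriality of $\Lie$ on group schemes \cite[I.7.10]{Jan03} the resulting map $\Lie(G_k) \hookrightarrow \gl(\bar V_k)$ respects the $p$-operation. Composing with the block inclusion $\gl(\bar V_k) \hookrightarrow \gl_d(k)$ (manifestly a restricted embedding) realises $\Lie(G_k)$ as a restricted Lie subalgebra of $\gl_d(k)$, and Theorem~\ref{mainthm} applied with this $d$ supplies the desired $p_0$. The principal technical obstacle is the construction of $V$ in the relative setting: whereas over a field one chooses a basis of $k[G]$, over a Noetherian ring some care is required to run the enlargement argument. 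Alternatively, since only large characteristics matter, one may first invert finitely many primes of $\Z$ to reduce to the situation where $R[G]$ is $R$-flat and $V$ can be taken locally free, at the cost of increasing $p_0$ by the inverted primes.
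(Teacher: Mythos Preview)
Your plan matches the paper's: embed $\Lie(G_k)$ as a restricted subalgebra of $\gl_d(k)$ with $d$ depending only on $(R,G)$, then apply Theorem~\ref{mainthm}. The divergence is in how that embedding (Lemma~\ref{L:representationdimd} in the paper) is produced. The paper neither descends to a Noetherian base nor constructs a $G$-stable $V\subseteq R[G]$; it simply fixes expressions $\Delta(x_i)=\sum_{j=1}^{r(i)} f_{i,j}^{(1)}\otimes f_{i,j}^{(2)}$ over $R$ for chosen algebra generators $x_i$, sets $d:=\sum_i r(i)$, and then for each $R$-field $k$ runs the standard over-a-field comodule argument inside $k[G_k]$ (arranging the second tensor factors to be $k$-linearly independent and invoking \cite[I.2.13(4)]{Jan03}) to obtain a faithful $G_k$-submodule of dimension $\le d$, padded to exactly $d$ by trivial summands. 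Faithfulness on Lie algebras is read off from the dual-numbers description of $\Lie$.

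Your route is viable but heavier, and one justification is not right as stated: ``terminating by Noetherianity'' does not do the job, since Noetherianity of $R$ gives no ascending chain condition on finitely generated $R$-submodules of the (typically infinitely generated) $R$-module $R[G]$. The local-finiteness statement you want is clean once $R[G]$ is flat over $R$ (so that second tensor factors can be chosen $R$-linearly independent and the one-step Waterhouse argument applies), and your generic-flatness fallback repairs this. The paper avoids the whole issue by extracting only the numerical bound $d$ from the data over $R$ and performing all comodule-theoretic work fibrewise over fields; the Remark after Lemma~\ref{L:representationdimd} makes this design choice explicit, noting that a global faithful $G\to\GL_d$ over $R$ need not exist and is not needed.
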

Thus for a fixed group scheme, the KW1 conjecture holds in {\it almost all} characteristics. The proof, which is presented in \textsection \ref{ss:groupschemes}, demonstrates that the Lie algebra $\Lie(G_k)$ admits a faithful restricted representation of dimension $d$ independent of the choice of characteristic $p > 0$ of the field $k$, which allows us to apply Theorem~\ref{mainthm}.

Until recently it was considered to be possible that \eqref{e:stateKW1} might hold for non-restricted Lie algebras; however, counterexamples to this hope were found by the third author, by presenting pairs of Lie algebras with isomorphic enveloping algebras and distinct indexes \cite{To17}.  We give some new examples in \textsection\ref{sec:notres}.  We produce a family of Lie algebras parameterised by the primes $p$ with the property that $\g_p$ is restricted if and only if \eqref{e:stateKW1} holds, which is the case if and only if $p \equiv 1$ modulo 4 (see Proposition~\ref{prop:notres}).

 \medskip

\noindent {\bf Acknowledgements:} The authors would like to thank Akaki Tikaradze for useful correspondence and for contributing the appendix to this paper. The third author also gratefully acknowledges the support of EPSRC grant number EP/N034449/1.

\section{Preliminaries}
\label{s:prelims}

\subsection{Model theory and the Lefschetz Principle}
\label{ss:modeltheory}
Since the main results and motivations of this paper come from representation theory, we expect that some of the readers will be unfamiliar with the model-theoretic methods which we use at several points. Hence we include here a short recap of some of the elements of model theory; a more detailed introduction to the theory may be found in \cite{Ma02}. Since our goal is to explain the Lefschetz Principle, we work exclusively with the language of rings.

The \emph{language of rings $\Lr$} consists of the binary function symbols symbols $+, -, \times$ and the constant symbols $0, 1$ \cite[Example~1.2.8]{Ma02}.  We consider the collection of first-order formulas in this language: these are the formulas that can be built from the symbols $\{\forall, \exists, \vee, \wedge, \neg, +, -, \times, 0, 1, =\}$ along with arbitrary choice of variables (see \cite[Section~1.1]{Ma02} for a precise definition of first-order formula). For example, for $n > 0$ fixed the following are formulas in $\Lr$:
\begin{eqnarray}
\label{e:notasentence}
& & (\forall x)(\forall y) (x^n + y^n \neq z^n);\\
\label{e:inverses}
& & (\forall x) (\exists y)\left( (xy = 1)) \vee (x = 0)\right); \\
\label{e:algclosed}
& & (\forall x_0) (\forall x_1) \cdots (\forall x_{n-1}) (\exists y) (y^n + x_{n-1} y^{n-1} + \cdots + x_0 = 0).
\end{eqnarray}
We say that a formula is a \emph{sentence} if every variable is bound to a quantifier; for example the formula \eqref{e:notasentence} is not a sentence because $z$ is a free variable, whilst \eqref{e:inverses} and \eqref{e:algclosed}
are both sentences in $\Lr$. If $\phi$ is a formula with free variables $x_1,...,x_n$ then we might indicate this by writing $\phi = \phi(x_1,...,x_n)$. In this case we can obtain a sentence from $\phi$ by binding the free variables to quantifiers. For example, if $\phi = \phi(z)$ is the formula from \eqref{e:notasentence} then $(\forall z) \phi(z)$ is a sentence in the language of rings. In this way we may use formulas to build sentences.

For $p \geq 0$ we record one more first-order sentence $\psi_p$ in $\Lr$:
\begin{eqnarray}
\label{e:charisp}
\psi_p & : & \underbrace{1 + \cdots + 1}_{p \text{ times}} = 0.
\end{eqnarray}

An \emph{$\Lr$-structure} is a set $R$ together with elements $0_R, 1_R \in R$, binary operations $+_R, -_R, \times_R: R \times R \rightarrow R$, and the binary relation $=_R$ which is always taken to be the diagonal embedding $R \subseteq R\times R$. For example, every ring $R$ gives rise to an $\Lr$-structure in the obvious way.

Later in this paper we will need to express some statements about elements of vector spaces as formulas and sentences in the language $\Lr$. To prepare for those arguments we now record a few examples which illustrate this procedure.
\begin{Example}
\label{E:firstordersentences}
Let $k$ be a field. The following statements can be formulated as sentences and formulas in $\Lr$ using appropriate variables:
\begin{enumerate}
\item there exist elements $x_1,...,x_m \in k^n$ which are linearly independent in $k^n$;
\item a given bilinear map $f : k^n\times k^n \rightarrow k^n$ defines a Lie bracket $[\cdot, \cdot]_f$ on $k^n$;
\item the Lie algebra $(k^n, [\cdot, \cdot]_f)$ is solvable.
\end{enumerate}
Moreover, the sentences and formulas we obtain do not depend on the choice of $k$.  To see this, we view $k^n$ as the set of tuples $(a_1,...,a_n)$ of elements of $k$. Then (1) is equivalent to the following first-order sentence
\begin{eqnarray*}
& & (\forall (i,j)\in \{1,\dots n\}\times \{1,\dots,m\})(\exists a_{i,j})(\forall b_1,...,b_m) \\ 
& & ((\sum_{j = 1}^m  b_j a_{1,j} = \sum_{j = 1}^m = b_j a_{2 ,j} = \cdots = \sum_{j = 1}^m b_j a_{m,j} = 0)
\Rightarrow (b_1 = b_2 = \cdots = b_m = 0)),
\end{eqnarray*}
where we take $x_j= (a_{1,j},\ldots, a_{n,j})$ for $1\leq j\leq m$.  This sentence does not depend on $k$.

Let $v_1,...,v_n \in k^n$ denote the standard basis. A bilinear map $f : k^n \times k^n \rightarrow k^n$ satisfies $f(v_i, v_j) = \sum_{l=1}^n f_{i,j;l} v_k$ for some scalars $f_{i,j;l}$, and we identify $f$ with the array $(f_{i,j;l})_{1\leq i,j,l \leq n} \in k^{n^3}$. Now the claim that \emph{$f$ defines a Lie bracket} can be encoded as a collection of linear and quadratic polynomial relations not depending on $k$ with integral coefficients in the variables
\begin{eqnarray}
\label{e:mapvariables}
(f_{i,j;l})_{1\leq i,j,l\leq n}.
\end{eqnarray}
These relations correspond to skew-symmetry and the Jacobi identity. Since every integer can be constructed using only the symbols $\{+, -, 1, 0\}$ it follows that \emph{$f$ defines a Lie bracket} is a first-order formula not depending on $k$ in $\Lr$ with free variables \eqref{e:mapvariables}. Similarly the statement \emph{the Lie algebra $(k^n, [\cdot, \cdot]_f)$ is solvable} can be encoded in terms of the vanishing of all $n$-fold iterations of the Lie bracket, which is equivalent to the vanishing of a collection of homogeneous polynomials of degree $2^n-1$ amongst the variables \eqref{e:mapvariables}. Again this is a first-order formula not depending on $k$ in $\Lr$ with free variables \eqref{e:mapvariables}.
\end{Example}

An \emph{$\Lr$-theory} is a set $T$ of first-order sentences in $\Lr$. If $\phi$ is a sentence and $M := (R, +_R, -_R, \times_R, 0_R, 1_R, =_R)$ is an $\Lr$-structure then we say that \emph{$M$ is a model of $\phi$}, and write $M \vDash \phi$, if the sentence $\phi$ is true when interpreted in $M$. If $T$ is an $\Lr$-theory then we say that \emph{$M$ is a model of $T$} and write $M \vDash T$ if $M \vDash \phi$ for all $\phi \in T$. 

One way to obtain a theory is to take the collection of all sentences which are true for every model of a particular class of mathematical object. Since mathematical objects are usually determined by axioms, we shall briefly explain (by way of an example) how to pass from a set of axioms to a theory of this type. Consider the set $A$ of axioms of commutative rings, which are clearly first-order sentences in $\Lr$. We may then consider the set ${\sf CR} \subseteq \Lr$ of sentences that are true for every model of $A$: i.e., those that are true in every commutative ring. Thus {\sf CR} denotes the theory of commutative rings. To illustrate the notation introduced above, observe that $(R, +_R, -_R, \times_R, 0_R, 1_R, =_R) \vDash {\sf CR}$ is equivalent to the statement that $(R, +_R, -_R, \times_R, 0_R, 1_R)$ is a commutative ring. As such we may slightly abuse terminology and identify the class of models of ${\sf CR}$ with the class of commutative rings.

In this paper we will be primarily interested in the theory of fields. The axioms of a field can obviously be written as first-order sentences in $\Lr$; for instance \eqref{e:inverses} expresses the existence of multiplicative inverses. The axioms of the algebraically closed fields are obtained by including the sentences \eqref{e:algclosed} for all $n > 0$. If $p > 0$ is prime then we may include the sentence $\psi_p$, defined in \eqref{e:charisp}, to obtain the axioms of the algebraically closed fields of characteristic $p > 0$; the corresponding theory is denoted {\sf ACF}$_p$. Alternatively we may include the sentences $\{ \neg \psi_p \mid p > 0\}$ to obtain the axioms of the algebraically closed fields of characteristic zero, and we denote their theory by {\sf ACF}$_0$ (cf.\ \cite[Example~1.2.8]{Ma02}). Since it will cause no confusion we identify the class of all models of {\sf ACF}$_p$ with the class of all algebraically closed fields of characteristic $p$, whenever $p \geq 0$ is fixed.

The following result is G\"odel's first completeness theorem in the context of $\Lr$ (cf.\ \cite[Theorem 2.1.2]{Ma02}).
\begin{Lemma}
\label{L:goedel}
Let $\phi$ be a first-order sentence and $T$ let be any theory in $\Lr$. Then $\phi$ is true when interpreted in every model of $T$ if and only if $\phi$ can be deduced from $T$ by means of a formal proof in $\Lr$.
\end{Lemma}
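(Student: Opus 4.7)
The plan is to prove the two implications of G\"odel's completeness theorem separately. The forward direction --- \emph{soundness} --- asserts that every sentence formally derivable from $T$ holds in every model of $T$. I would proceed by induction on the length of a formal proof, once a fixed deductive system (e.g.\ Hilbert-style with modus ponens and generalisation) has been chosen: verify that every logical axiom of the system is true in any $\Lr$-structure, and that each inference rule preserves validity. This direction is essentially routine, amounting to a check of each schema.

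The substantial direction is \emph{completeness}: if $\phi$ holds in every model of $T$, then $\phi$ is derivable from $T$. I would argue the contrapositive via the \emph{model existence theorem}, which states that any syntactically consistent $\Lr$-theory admits a model. Granted this, if $\phi$ is not derivable from $T$ then $T \cup \{\neg \phi\}$ is consistent, so it has a model $M$; then $M \vDash T$ while $M \not\vDash \phi$, contradicting the hypothesis. So the whole proof reduces to model existence.

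The main obstacle, and the heart of the argument, is establishing model existence via Henkin's construction. Given a consistent theory $T_0$, I would enlarge $\Lr$ by a large enough set of fresh constant symbols and extend $T_0$ to a maximal consistent theory $T^*$ in the enlarged language that enjoys the \emph{Henkin property}: for every sentence $(\exists x)\,\psi(x) \in T^*$ there is a constant symbol $c$ with $\psi(c) \in T^*$. This extension is produced by alternately (i) adjoining witnessing instances $\psi(c)$ for freshly introduced constants $c$, and (ii) enlarging to a maximal consistent extension via Zorn's lemma (or transfinite recursion on an enumeration of sentences, handled carefully when the language is uncountable). Checking that consistency is preserved through the witness-adjoining step is the delicate point: it relies on the fact that if $T_0\cup\{(\exists x)\psi(x)\}$ is consistent and $c$ does not occur in $T_0$, then $T_0\cup\{\psi(c)\}$ is consistent.

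Finally, I would build the canonical \emph{Henkin model} $M^*$ whose universe is the set of closed terms of the enlarged language modulo the equivalence $t \sim t'$ iff $(t = t') \in T^*$, with the ring operation symbols $+,-,\times$ and the constants $0,1$ interpreted on equivalence classes in the obvious way (well-defined because $T^*$ contains the congruence axioms for equality). The crucial \emph{Truth Lemma} --- proved by induction on formula complexity --- asserts that a sentence $\sigma$ lies in $T^*$ if and only if $M^* \vDash \sigma$; the Henkin property is exactly what makes the inductive step for the existential quantifier go through, while maximal consistency handles the Boolean connectives. Restricting the $\Lr$-reduct of $M^*$ then yields a model of $T_0$, completing the proof.
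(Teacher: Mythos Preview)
Your outline is the standard Henkin proof of G\"odel's completeness theorem and is correct as a sketch; the steps you identify (soundness by induction on derivations, reduction of completeness to model existence, Henkinisation and the Truth Lemma) are exactly the right ones, and nothing you wrote is mistaken.

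The paper, however, does not prove this lemma at all: it simply records it as G\"odel's completeness theorem and cites \cite[Theorem~2.1.2]{Ma02}. So there is no meaningful comparison of approaches to make---you have supplied a proof where the authors chose to quote a standard reference. If anything, your write-up is considerably more than what the paper requires at this point; the lemma is included only as background for the Lefschetz Principle in Corollary~\ref{C:lefschetz}, and the authors treat it as a black box from model theory.
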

We say that an $\Lr$-theory $T$ is \emph{complete} if, for every first-order sentence $\phi$ in $\Lr$, either $\phi$ is true when interpreted in every model of $T$, or $\neg \phi$ is true when interpreted in every model of $T$. By Lemma~\ref{L:goedel} this is equivalent to saying that for every sentence $\phi$ we can derive either $\phi$ or $\neg \phi$ from $T$ by means of a formal proof. The following well-known result is proved by quantifier elimination \cite[Corollary~3.2.3]{Ma02}.
\begin{Theorem}
\label{T:ACcompleteness}
For $p = 0$ or $p$ prime, the theory ${\sf ACF}_p$ is complete.
\end{Theorem}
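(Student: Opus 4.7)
The plan is to prove completeness of ${\sf ACF}_p$ via quantifier elimination, following the approach indicated by the reference to \cite[Corollary 3.2.3]{Ma02}. I would first establish that ${\sf ACF}_p$ admits quantifier elimination: every $\Lr$-formula $\phi(x_1,\dots,x_n)$ is equivalent, modulo ${\sf ACF}_p$, to a quantifier-free formula $\psi(x_1,\dots,x_n)$ in the same free variables. By induction on the complexity of $\phi$, this reduces to formulas of the form $\exists y\,\theta(x_1,\dots,x_n,y)$ with $\theta$ quantifier-free. Writing $\theta$ in disjunctive normal form and pushing the existential quantifier through the disjunction, a further reduction brings us to the case where $\theta$ is a conjunction of polynomial equations $p_i(\bar x,y)=0$ and inequations $q_j(\bar x,y)\neq 0$ with $p_i,q_j\in\Z[\bar x,y]$.

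The technical heart of the argument is to eliminate the variable $y$ from such a system using only the axioms for algebraically closed fields. One views the $p_i$ and $q_j$ as polynomials in $y$ whose coefficients lie in $\Z[\bar x]$, and proceeds by a case analysis according to which leading coefficients in $y$ vanish. Repeated application of the Euclidean algorithm collapses the system of equations to a single polynomial equation in $y$, whose solvability is automatic over any algebraically closed field once the degree is at least one; the inequations $q_j\neq 0$ exclude only finitely many candidate roots and can be accommodated unless they collectively rule out every root, a condition itself captured by polynomial identities in $\bar x$. The main obstacle is carrying out this bookkeeping cleanly so that the end result really is a Boolean combination of polynomial equalities over $\Z$ in the $x_i$ alone; this is the classical argument of Tarski and Chevalley.

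Once quantifier elimination is established, completeness follows at once. A sentence $\phi$ in $\Lr$ has no free variables, so by quantifier elimination it is equivalent, modulo ${\sf ACF}_p$, to a quantifier-free sentence. Any such sentence is a Boolean combination of atomic sentences of the form $m=n$, where $m$ and $n$ are constant $\Lr$-terms; each such term evaluates to an integer in every field, and the truth value of $m=n$ depends only on the characteristic. Therefore every model of ${\sf ACF}_p$ assigns the same truth value to $\phi$, and Lemma~\ref{L:goedel} then gives either $\phi$ or $\neg\phi$ as a consequence of ${\sf ACF}_p$.

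A clean alternative that bypasses the polynomial bookkeeping is the \L o\'s--Vaught test. No algebraically closed field is finite, so ${\sf ACF}_p$ has no finite models; and by Steinitz's theorem any two algebraically closed fields of the same characteristic and the same uncountable cardinality $\kappa$ are isomorphic, since they both have transcendence degree $\kappa$ over their common prime field. Thus ${\sf ACF}_p$ is $\kappa$-categorical for every uncountable $\kappa$, and the \L o\'s--Vaught criterion immediately yields completeness.
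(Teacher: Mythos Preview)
Your proposal is correct. The paper does not actually supply a proof of this theorem: it simply records it as a well-known fact, citing \cite[Corollary~3.2.3]{Ma02} and noting that it is ``proved by quantifier elimination.'' Your first approach therefore aligns exactly with what the paper invokes, and your sketch of the Tarski--Chevalley elimination procedure is accurate in outline.

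Your second route via the \L o\'s--Vaught test is a genuinely different argument from the one the paper points to. It trades the syntactic bookkeeping of quantifier elimination for Steinitz's classification of algebraically closed fields by characteristic and transcendence degree, together with a soft compactness-style criterion. This is shorter and avoids the case analysis on leading coefficients, at the cost of not yielding quantifier elimination itself (which is, however, not needed anywhere else in the paper). Either argument is perfectly adequate here, since the paper only uses completeness of ${\sf ACF}_p$ as a black box in the proof of Corollary~\ref{C:lefschetz}.
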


As an immediate consequence we obtain a first-order version of the Lefschetz Principle from algebraic geometry \cite[Corollary~2.2.10]{Ma02}:
\begin{Corollary} (Lefschetz Principle)
\label{C:lefschetz}
If $\phi$ is a sentence in $\Lr$ then:
\begin{enumerate}
\item If $\phi$ is true in some model of {\sf ACF}$_p$ where $p \ge 0$ then $\phi$ is true in every model of {\sf ACF}$_p$.
\item If $\phi$ is true in some model of {\sf ACF}$_0$ then there exists $p_0 \in \N$ such that $\phi$ is true in any model of ${\sf ACF}_p$ for $p > p_0$.
\end{enumerate}
\end{Corollary}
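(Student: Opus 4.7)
The plan is to derive both parts directly from the completeness of ${\sf ACF}_p$ together with the completeness theorem of Gödel (Lemma~\ref{L:goedel}), so essentially no new ideas are required beyond a careful unpacking of definitions.

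For part (1), I would argue by contradiction. Suppose $\phi$ holds in some model $K \vDash {\sf ACF}_p$ but fails in some other model $K' \vDash {\sf ACF}_p$. Then $\neg\phi$ is true when interpreted in $K'$, so it is not the case that $\phi$ is true in every model of ${\sf ACF}_p$. On the other hand, since $\phi$ is true in $K$, it is not the case that $\neg\phi$ is true in every model of ${\sf ACF}_p$. This directly contradicts Theorem~\ref{T:ACcompleteness}, so $\phi$ must hold uniformly across all models of ${\sf ACF}_p$.

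For part (2), I would first apply part (1) at $p = 0$ to conclude that $\phi$ holds in every model of ${\sf ACF}_0$. By Lemma~\ref{L:goedel}, this gives a formal proof of $\phi$ from the axioms of ${\sf ACF}_0$, namely the field axioms, the axioms \eqref{e:algclosed} guaranteeing algebraic closure, and the sentences $\{\neg\psi_q : q \text{ prime}\}$ asserting that the characteristic is not $q$. The key point now is finiteness: any formal proof uses only finitely many axioms, so in particular only finitely many of the sentences $\neg\psi_q$, say for $q$ in a finite set $S$ of primes. Setting $p_0 := \max S$, for any prime $p > p_0$ each sentence $\neg\psi_q$ with $q \in S$ holds in every model of ${\sf ACF}_p$ (since $q \neq p$), as do the field axioms and the algebraic closure axioms. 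Therefore the same formal proof of $\phi$ is valid from a subset of the axioms of ${\sf ACF}_p$, so $\phi$ holds in every model of ${\sf ACF}_p$ for $p > p_0$.

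The only subtle point is the invocation of finiteness of formal proofs in step (2); this is built into the definition of a formal proof in first-order logic and, together with Gödel's theorem, is precisely what allows one to transfer truth from characteristic zero to sufficiently large positive characteristic. Everything else is a direct application of completeness, so I do not anticipate any technical obstacle.
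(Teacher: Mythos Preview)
Your proof is correct and follows essentially the same approach as the paper: part (1) is an immediate consequence of the completeness of ${\sf ACF}_p$, and part (2) proceeds via G\"odel's completeness theorem, the finiteness of formal proofs, and the observation that only finitely many of the sentences $\neg\psi_q$ are used. The one very minor imprecision---calling the $\neg\psi_q$ with $q\in S$ ``axioms of ${\sf ACF}_p$'' when strictly they are theorems of ${\sf ACF}_p$---is harmless, since they hold in every model of ${\sf ACF}_p$ and so the conclusion follows either by soundness or by prepending short derivations of the $\neg\psi_q$.
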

\begin{proof}
Part (1) is precisely Theorem~\ref{T:ACcompleteness}. For part (2) suppose that $\phi$ is true over some field satisfying the axioms of ${\sf ACF}_0$. Then by part (1) it is true for every such field, and by Lemma~\ref{L:goedel} we conclude that there exists a formal proof for $\phi$ in $\Lr$ using only the axioms of {\sf ACF}$_0$. Since the proof of $\phi$ involves only finitely many sentences in $\Lr$, it follows that the set of primes
$$P_\phi := \{p \mid \neg \psi_p \text{ occurs in the proof of } \phi\}$$
is finite, where $\psi_p$ is defined in \eqref{e:charisp}. Hence for $p > \max (P_\phi)$ there is a formal proof of $\phi$ using the axioms of ${\sf ACF}_p$. Using Lemma~\ref{L:goedel} once more we see that $\phi$ is true for every $\Lr$-structure satisfying the axioms of ${\sf ACF}_p$.
\end{proof}

\subsection{Restricted Lie algebras and reduced enveloping algebras}

Fix a field $k$ of characteristic $p > 0$ and let $\g$ be a Lie algebra over $k$. As usual we write $U(\g)$ for the enveloping algebra and $Z(\g)$ for the centre of $U(\g)$. Then $\g$ is said to be a \emph{restricted Lie algebra over $k$} if it comes equipped with a $p$-map $\g \to \g$, written $x \mapsto x^{[p]}$, which satisfies two axioms: if we write $\xi : \g \to U(\g)$ for the map $x \mapsto x^p - x^{[p]}$, then $(-)^{[p]}$ must satisfy
\begin{enumerate}
\item $\xi(\g) \subseteq Z(\g)$;
\smallskip
\item $\xi$ is $p$-semilinear in the sense of \cite[Lemma~2.1]{Jan98}.
\end{enumerate}
It follows from the PBW theorem that the vector space $\xi(\g)$ generates a polynomial algebra of rank equal to $\dim(\g)$ inside $U(\g)$. This algebra is referred to as the $p$-centre of $U(\g)$, and is denoted $Z_p(\g)$. If $\{x_i \mid i \in I\}$ is a basis for $\g$ then the PBW theorem for $U(\g)$ implies that $Z_p(\g)$ is isomorphic to a polynomial ring generated by $\{\xi(x_i) \mid i \in I\}$. Hence $Z_p(\g)$ can be naturally identified with the coordinate ring $k[(\g^*)^{(1)}]$ on the Frobenius twist of $\g^*$. 

When $k$ is an algebraically closed field we have $\g^* = (\g^*)^{(1)}$ as sets and so for every $\chi \in \g^*$ there is a maximal ideal $I_\chi \in \Spec Z_p(\g)$. Explicitly we have $I_\chi := (x^p - x^{[p]} - \chi(x)^p \mid x \in \g)$ and the \emph{reduced enveloping algebra with $p$-character $\chi$} is defined to be
$$U_\chi(\g) := U(\g) / U(\g)I_\chi.$$
We have $\dim U_\chi(\g)= p^{\dim \g}$, so a simple module for $U_\chi(\g)$ can have dimension at most $p^{\frac{1}{2}{\dim \g}}$.  If $\g_0 \subseteq \g$ is a restricted subalgebra and $\chi \in \g^*$ then we can abuse notation by identifying $U_{\chi|_{\g_0}}(\g_0)$ with the subalgebra of $U_\chi(\g)$ generated by $\g_0$, and denote this subalgebra by $U_\chi(\g_0)$. We say that a $\g$-module $M$ has $p$-character $\chi$ if the corresponding representation $U(\g) \rightarrow \End_k(M)$ factors through the quotient $U(\g) \rightarrow U_\chi(\g)$. If $\g_0 \subseteq \g$ are restricted Lie algebras, $\chi \in \g^*$ and $M_0$ is a $U_\chi(\g_0)$-module then we may define the induced module
\begin{eqnarray}
\Ind_{\g_0}^{\g, \chi}(M_0) := U_\chi(\g) \otimes_{U_\chi(\g_0)} M_0.
\end{eqnarray}
We have
\begin{eqnarray}
\label{e:induceddimn}
\dim \Ind_{\g_0}^{\g, \chi}(M_0) = p^{\dim \g - \dim \g_0} \dim(M_0)
\end{eqnarray}
and this induced module is universal amongst $U_\chi(\g)$-modules $M$ such that the restriction to $U_\chi(\g_0)$ contains a submodule isomorphic to $M_0$: that is, any nonzero homomorphism of $U_\chi(\g_0)$-modules $M_0 \rightarrow M$ induces a nonzero homomorphism $\Ind_{\g_0}^{\g, \chi}(M_0) \rightarrow M$ of $U_\chi(\g)$-modules.

The coadjoint $\g$-module is the vector space $\g^*$ with module structure given by $(x \cdot \chi)(y) := \chi([y,x])$ where $x, y\in \g$ and $\chi \in \g^*$. The stabiliser of $\chi \in \g^*$ is then defined to be
$$\g^\chi := \{x \in \g \mid x\cdot \chi = 0\} = \{x \in \g \mid \chi [x, \g] = 0\}$$
and the index of $\g$ --- commonly denoted $\ind(\g)$ --- is the minimal dimension of $\g^\chi$ as $\chi$ varies over all elements of $\g^*$.

\section{The maximal dimensions of simple modules}
\label{S:KW1section}

In this section we prove Theorem~\ref{mainthm}. In order to do so we recall a few pieces of terminology. If $\g$ is a Lie algebra over a field $k$ and $\chi \in \g^*$ we say that a Lie subalgebra $\s\subseteq \g$ is \emph{subordinate to $\chi$} if $\s$ is an isotropic subspace with respect to the skew-symmetric
form\begin{eqnarray}
\label{e:form}
\begin{array}{rcl} B_\chi : \g\times\g &\rightarrow & k \\ (x,y) & \mapsto & \chi([x,y]):\end{array}
\end{eqnarray}
in other words, if $\chi([\s,\s]) = 0$. We say that a subalgebra $\s\subseteq \g$ is a \emph{polarisation of $\chi$} if $\s$ is a Lagrangian for $B_\chi$, i.e., $\s$ is a maximal isotropic subspace of $\g$. Since the stabiliser $\g^\chi$ coincides with the radical of $B_\chi$ it follows from \cite[1.12.1]{Dix96}
that
\begin{eqnarray}
\label{e:polarisationdim}
\dim(\s) \leq \frac{1}{2}(\dim(\g) + \dim (\g^\chi))
\end{eqnarray}
if $\s$ is subordinate to $\chi$. Furthermore equality holds if and only if $\s$ is a polarisation of $\chi$. Finally we say that a Lie subalgebra $\s \subseteq \g$ is a {\it weak polarisation of $\chi$} if $\s$ is isotropic for the form \eqref{e:form} and
\begin{eqnarray}
\label{e:weakpolarisationdim}
\dim(\s) = \frac{1}{2}(\dim(\g) + \ind (\g)).
\end{eqnarray}

The proof of Theorem~\ref{mainthm} rests on the existence of solvable weak polarisations for linear forms, and the following result is the key step.
\begin{Proposition}
\label{P:solvablepolarisationsexist}
For all $n, d \in \N$, there exists $p_1 = p_1(n,d) \in \N$ such that if:
\begin{enumerate}
\item $k$ is an algebraically closed field of characteristic $p > p_1$;
\item $\g$ is a Lie algebra of dimension $n$ over $k$;
\item there exists a faithful representation $\rho : \g \rightarrow \gl_d(k)$;
\end{enumerate}
then for every $\chi \in \g^*$ there is a solvable weak polarisation $\s\subseteq \g$ such that $\rho(\s)$ is upper-triangularisable in $\gl_d(k)$.
\end{Proposition}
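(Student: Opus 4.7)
The plan is to apply the Lefschetz Principle (Corollary~\ref{C:lefschetz}(2)) by encoding the conclusion of the proposition for fixed $n,d$ as a single first-order sentence $\Phi_{n,d}$ in $\Lr$, verifying that $\Phi_{n,d}$ holds in every model of $\mathsf{ACF}_0$ by classical Lie theory, and then transferring to characteristic $p > p_1(n,d)$.

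\textbf{Step 1 (first-order formulation).} Following the method of Example~\ref{E:firstordersentences}, I would encode a Lie bracket on $k^n$ by its structure constants $f = (f_{i,j;l})$, a linear map $\rho:k^n\to\gl_d(k)$ by the tuple of matrices $(\rho(v_i))_{i=1}^n$, and a form $\chi\in(k^n)^*$ by its coordinates $(c_1,\dots,c_n)$. The hypotheses that $f$ satisfies skew-symmetry and Jacobi, that $\rho$ is a Lie homomorphism, and that $\rho$ is injective (i.e.\ $\rho(x)=0\Rightarrow x=0$), are all first-order in $\Lr$ and independent of $k$. A candidate subspace $\s$ of dimension $r$ is specified by $r$ vectors $x_1,\dots,x_r\in k^n$, and each of the conditions ``the $x_i$ are linearly independent'', ``$\span(x_1,\dots,x_r)$ is closed under $[\cdot,\cdot]_f$'', ``$\chi([x_i,x_j]_f)=0$'', ``$\s^{(r)}=0$'' (solvability, since $\s^{(s)}$ stabilises by step $s\le r\le n$), and ``there exist $g,g'\in\Mat_d(k)$ with $gg'=g'g=I$ and each $g\rho(x_i)g'$ upper-triangular'' (simultaneous upper-triangularisability) is first-order. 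To fix $r=\tfrac12(n+\ind(\g))$ without quantifying over integers in $\Lr$, I observe that $\dim\g^\chi=n-\rank B_\chi$ where $B_\chi$ is the $n\times n$ matrix with entries $\chi([v_i,v_j]_f)$, and ranks of matrices of fixed size are first-order (vanishing/non-vanishing of minors). Taking a finite disjunction over $m\in\{0,1,\dots,n\}$ of the clauses ``$\ind(\g)=m$ implies a solvable weak polarisation of dimension $(n+m)/2$ with upper-triangularisable $\rho$-image exists'' and quantifying universally over $f,\rho,\chi$, I obtain the desired sentence $\Phi_{n,d}$.

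\textbf{Step 2 (verification in characteristic zero).} Let $k\vDash\mathsf{ACF}_0$ and let $(\g,\rho,\chi)$ satisfy the hypotheses. By the classical theorem on the existence of solvable polarisations for arbitrary finite-dimensional Lie algebras in characteristic zero (cf.\ \cite[Ch.~1]{Dix96}), there is a solvable Lie subalgebra $\p\subseteq\g$ with $\chi([\p,\p])=0$ and $\dim\p=\tfrac12(\dim\g+\dim\g^\chi)$. Since $\p$ is solvable and $k$ is algebraically closed of characteristic zero, Lie's theorem yields a complete flag of $\p$-invariant subspaces of $\p$, producing Lie subalgebras of $\p$ of every dimension up to $\dim\p$; choosing one of dimension $\tfrac12(\dim\g+\ind\g)\le\dim\p$ gives a solvable weak polarisation $\s$. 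Applying Lie's theorem again, this time to $\rho|_\s$ acting on $k^d$, shows that $\rho(\s)$ is simultaneously upper-triangularisable. Hence $\Phi_{n,d}$ holds in $k$.

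\textbf{Step 3 (transfer).} By Corollary~\ref{C:lefschetz}(2) there exists $p_1=p_1(n,d)$ such that $\Phi_{n,d}$ is true in every model of $\mathsf{ACF}_p$ for $p>p_1$, which is exactly the conclusion of the proposition. The main obstacle is Step~1: the definition of a weak polarisation refers to $\ind(\g)$, which is a minimum over the infinite set $\g^*$ and therefore resists direct first-order encoding. The key idea is that for fixed $n$ the possible values of $\ind(\g)$ form the finite set $\{0,1,\dots,n\}$, allowing one to replace a quantifier over integers by a finite disjunction and to convert the numerical dimension constraint into a rank condition on an explicit matrix.
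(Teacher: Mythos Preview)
Your approach is essentially the paper's: encode the assertion for fixed $n,d$ as a first-order statement, verify it in characteristic zero using classical Lie theory, and transfer by the Lefschetz Principle. Your packaging (a single sentence $\Phi_{n,d}$ built from a disjunction over the possible values of $\ind\g$) is logically equivalent to the paper's family $\{\Phi_r\}_{r=0}^n$ followed by taking $p_1=\max_r p_1^r$; your encoding of $\ind\g$ via the rank of the matrix $B_\chi$ is a neat variant of what the paper does.

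There is, however, a genuine error in Step~2. You invoke ``the classical theorem on the existence of solvable polarisations for arbitrary finite-dimensional Lie algebras in characteristic zero'', asserting a solvable subalgebra $\p$ with $\chi([\p,\p])=0$ and $\dim\p=\tfrac12(\dim\g+\dim\g^\chi)$. This is false in general: take $\g$ simple and $\chi=0$. Then $B_\chi=0$, so the unique polarisation is $\g$ itself, which is not solvable. Vergne's construction (Dixmier 1.12.10) produces solvable polarisations only for solvable $\g$; for arbitrary $\g$ the correct statement, and the one the paper invokes via \cite[Corollary~1.12.17]{Dix96}, is the existence of a solvable subalgebra subordinate to $\chi$ of dimension at least $\tfrac12(\dim\g+\ind\g)$, i.e.\ a solvable \emph{weak} polarisation (possibly after trimming). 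Since a weak polarisation is exactly what your sentence $\Phi_{n,d}$ demands, this is all you need; once you replace the overclaim by this weaker input, your argument goes through. Incidentally, your trimming device---applying Lie's theorem to the adjoint action to obtain a chain of ideals of every dimension inside a solvable algebra---is correct and makes explicit a step the paper leaves implicit.
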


\begin{proof}
Fix $n, d\in \N$, $r \in \{0,...,n\}$ and let $k$ be an algebraically closed field of characteristic $p \geq 0$.
Let $\{v_1,...,v_n\}$ denote the standard basis for $k^n$. If $f$ is a bilinear map from $k^n\times k^n$ to $k^n$ then $f(v_i, v_j) = \sum_{l=1}^n f_{i,j; l} v_k$ and so we identify
the set of such bilinear maps with $k^{n^3}$ and identify $f$ with
$(f_{i,j; l})_{1\leq i,j,l\leq n} \in k^{n^3}$.
For $i=1,...,n$ we write $A_i$ for an element of $\Mat_d(k) \cong k^{d^2}$, so that the $n$-tuple
$A = (A_1,...,A_n)$ is an element
of $k^{nd^2}$. Finally write $\chi = (\chi_1,...,\chi_n) \in k^n$ and we view $\chi$ as an element of
$(k^n)^* = \Hom_k(k^n, k)$ via $v_i \mapsto \chi_i$.

Fix $r \in \{0,...,n\}$ and for any $(f, A, \chi) \in k^{n^3 + nd^2 + n}$ we consider the following 
four claims:
\begin{enumerate}
\item[(i)] $f = (f_{i,j;l})$ are the structure constants of a Lie bracket $[\cdot, \cdot]_f$ on $k^n$;
\item[(ii)] the Lie algebra $(k^n, [\cdot, \cdot]_f)$ has index equal to $r$;
\item[(iii)] the linear map $k^n \rightarrow \Mat_d(k)$ given by
$v_i \mapsto A_i$ is a faithful Lie algebra representation $\rho : k^n \rightarrow \gl_d(k)$;
\item[(iv)] There exist elements $x_1,...,x_s \in k^n$ where $s = \frac{1}{2}(n + r)$ which are linearly
independent and span a solvable Lie subalgebra $\s$ of $(k^n, [\cdot, \cdot]_f)$
such that $\chi([\s,\s]_f) = 0$ and
$\rho(\s)$ is upper-triangularisable  inside $\gl_d(k)$.
\end{enumerate}

Now consider the following statements indexed by $r$:
\begin{eqnarray}
\Phi_r & : & \forall (f, A, \chi) \in k^{n^3 + d^2 n + n} (\operatorname{(i)} \wedge \operatorname{(ii)}
 \wedge \operatorname{(iii)}) \Rightarrow \operatorname{(iv)})
\end{eqnarray}\vspace{-12pt}

\begin{center}
\begin{minipage}{0.8\textwidth}
\begin{enumerate}
\item[{\bf Claim:}] Each statement $\Phi_r$ can be formulated as a first-order sentence in the language $\Lr$ of rings (in the notation of \textsection\ref{ss:modeltheory}).  Moreover, the resulting sentences do not depend on the choice of $k$.
\end{enumerate}  
\end{minipage}
\end{center}

In Example~\ref{E:firstordersentences}(2) we
showed that (i) is given by a formula independent of $k$ in the language $\Lr$ with free variables $(f_{i,j;l})$.
If $(f_{i,j;l})$ are the structure constants
of a Lie bracket $[\cdot, \cdot]_f$ then the structure constants of the coadjoint representation
$\ad^*_f : k^n \rightarrow \Mat_n(k)$ are $(-f_{i,l;j})_{1\leq i,j,l\leq n}$.
It follows that for $x = (a_1,...,a_n) \in k^n$ the statement $\ad^*_f(x)\chi = 0$
can be expressed by the vanishing of certain polynomial functions, with integral coefficients, in the variables
$(f_{i,j;l})$ and $a_1,...,a_n, \chi_1,...,\chi_n$. The statement (ii) can be phrased in the following way:
\emph{there exists
$\psi = (\psi_1,...,\psi_n) \in k^n \cong \Hom_k(k^n, k)$ and linearly independent elements $x_1,...,x_r \in k^n$
that satisfy $\ad^*_f(x_i) \psi = 0$, and there does not exist
$\varphi \in \Hom_k(k^n, k)$ such that if $x_1,...,x_r \in k^n$ satisfy $\ad^*_f(x_i)\varphi = 0$
then $x_1,...,x_r$ are linearly dependent}.
This is a first-order formula which does not depend on the choice of $k$ in
$\Lr$ with free variables $(f_{i,j;l})$ thanks to part (1) of
Example~\ref{E:firstordersentences} and the previous remarks.
The fact that (iii) is a first-order formula which does not depend on the choice of $k$ in $\Lr$ with free variables $(f, A)$ follows similarly. Statement (iv) asserts the existence
of $x_1,...,x_s$ spanning a solvable Lie subalgebra of $(k^n, [\cdot, \cdot]_f)$.
The existence of elements that
satisfy $\chi([x_i, x_j]_f) = 0$ and span a solvable Lie algebra is a first-order formula not depending on $k$---indeed this follows quickly from Example~\ref{E:firstordersentences}(3). The fact that
the solvable subalgebra can be upper-triangularised in $\gl_d(k)$ can be expressed by the existence
of a basis of $k^d$ satisfying special properties which can also be expressed as first-order
formulas not depending on $k$ in $\Lr$. Since the last remark is proved in a manner almost identical to the previous parts,
we leave the details to the reader. The only free variables in (iv) are $(f, A, \chi)$ and so we have shown that
all of the variables in the formulas (i), (ii), (iii), (iv) are bound to quantifiers in $\Phi_r$.
Hence $\Phi_r$ is a first-order sentence in $\Lr$ which is independent of the choice of $k$, and this proves the claim.

Keep $n,d \in \N$, let $r \in \{0,...,n\}$ fixed, and now suppose that $k$ is algebraically closed of characteristic zero. By Lie's theorem \cite[Theorem~1.3.12]{Dix96}
we know that every solvable Lie subalgebra of $\gl_d(k)$ is 
upper-triangularisable, and thanks to \cite[Corollary~1.12.17]{Dix96}
we know that when $\g$ is a Lie algebra over $k$ of 
dimension $n$ and index $r$, then for all $\chi \in \g^*$ there exists a solvable subalgebra of $\g$ 
subordinate to $\chi$. Hence every algebraically closed field
of characteristic zero is a model for $\Phi_r$.
It follows by  the Lefschetz Principle
(Corollary~\ref{C:lefschetz}) that there is a number $p_1^r = p_1^r(n,d) \in \N$
such that $\Phi_r$ is also true when interpreted in any algebraically closed field of characteristic 
$p > p_1^r$. If we set $p_1 := \max\{p_1^0, p_1^1,...,p_1^n\}$ then it follows from the above 
remarks that for all $r = 0,...,n$, $\Phi_r$ is true
for every algebraically closed field of characteristic 
$p > p_1$. This completes the proof of the current Proposition.
\end{proof}

\begin{Lemma} \cite[Corollary~2.6]{KW71} \label{L:equidim}
If $\s$ is a solvable Lie algebra over $k$ and $\chi \in \g^*$ then all irreducible $U_\chi(\s)$-modules have the same dimension.
\end{Lemma}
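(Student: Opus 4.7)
The plan is to prove this by induction on $\dim \s$, using solvability to peel off a codimension-one restricted ideal at each step and running a Clifford-theoretic analysis. The base case $\dim \s \leq 1$ is immediate: the algebra $U_\chi(\s)$ is then commutative over the algebraically closed field $k$, so all simple modules are one-dimensional.

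For the inductive step, since $\s$ is solvable and nonzero, $[\s,\s]$ is a proper ideal and $\s/[\s,\s]$ is an abelian restricted Lie algebra. From the latter I extract a codimension-one restricted subalgebra, and lift it to obtain a codimension-one restricted ideal $\h \subseteq \s$ containing $[\s,\s]$. Fix $x \in \s \setminus \h$. Using PBW together with the restricted relation $x^p \equiv x^{[p]} + \chi(x)^p \pmod{I_\chi}$, the algebra $U_\chi(\s)$ is a free right $U_{\chi|_\h}(\h)$-module on the basis $\{1, x, \ldots, x^{p-1}\}$. Given an irreducible $U_\chi(\s)$-module $M$ and an irreducible $\h$-submodule $N \subseteq M|_\h$, the $\s$-submodule generated by $N$ is spanned by $\{x^i N : 0 \le i \le p-1\}$ and equals $M$ by irreducibility, yielding $\dim M \leq p \dim N$. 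By the induction hypothesis $\dim N =: d_\h$ depends only on $\chi|_\h$, and since $N \subseteq M$ we also have $\dim M \geq d_\h$; a sharper look reveals $\dim M \in \{d_\h, p d_\h\}$.

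The main obstacle is to show that which of the two alternatives occurs is a function of $(\s,\chi)$ alone and not of the particular $M$. The natural strengthening, which I would establish by the same induction, is the full Kac--Weisfeiler formula
\[
\dim M = p^{(\dim \s - \dim \s^\chi)/2}.
\]
An elementary computation with the skew-symmetric form $B_\chi$ from \eqref{e:form} comparing the radicals in $\s$ and $\h$ shows $\dim \s^\chi - \dim \h^{\chi|_\h} = \pm 1$. A Clifford-theoretic dichotomy then takes over: the action of $\s/\h \cong k$ on the set of isomorphism classes of irreducible $\h$-submodules of $M$ has orbits of size dividing $p$, so of size $1$ or $p$. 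Orbits of size $p$ give $M$ as an induced module of dimension $p d_\h$; orbits of size $1$ yield an extension of $N$ to $\s$ of dimension $d_\h$. Matching this dichotomy against the sign of $\dim \s^\chi - \dim \h^{\chi|_\h}$ (size $p$ when the difference is $-1$, size $1$ when it is $+1$) closes the induction and, in particular, shows that $\dim M$ depends only on $\chi$ as required. The delicate point I expect to spend the most care on is verifying the parity computation $\dim \s^\chi - \dim \h^{\chi|_\h} = \pm 1$ and aligning the correct sign with the correct orbit size.
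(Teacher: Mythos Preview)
The paper does not prove this lemma; it is quoted from \cite[Corollary~2.6]{KW71}. Your inductive strategy---reduce to a codimension-one restricted ideal $\h\supseteq[\s,\s]$ and establish the sharper formula $\dim M=p^{(\dim\s-\dim\s^\chi)/2}$---is exactly the Kac--Weisfeiler approach, and your parity computation $\dim\s^\chi-\dim\h^{\chi|_\h}=\pm1$ is correct.

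The gap is in the Clifford step. With $\h\supseteq[\s,\s]$ there is no well-defined action of $\s/\h$ on isomorphism classes of simple $\h$-modules: you are importing group-theoretic Clifford theory, but here the extension $U_\chi(\h)\subset U_\chi(\s)$ is governed by the \emph{derivation} $\ad(x)$, not by an automorphism, and ``orbit size'' has no meaning. Concretely, a short calculation with the filtration $F_j=\sum_{i\le j}x^i\otimes N$ of $\Ind_{\h}^{\s,\chi}N$ gives $h\cdot(x^j\otimes n)\equiv x^j\otimes hn\pmod{F_{j-1}}$, so every $\h$-composition factor of $M$ is already isomorphic to $N$; in your language the orbit always has size $1$, yet both outcomes $\dim M\in\{d_\h,\,p\,d_\h\}$ genuinely occur. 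The actual dichotomy in \cite{KW71} is whether $\s^\chi\subseteq\h$: if $\s^\chi\not\subseteq\h$ one picks $x\in\s^\chi\setminus\h$ and extends $N$ to a simple $U_\chi(\s)$-module of dimension $d_\h$; if $\s^\chi\subseteq\h$ one uses some $h_0\in\h$ with $\chi([x,h_0])\ne0$ to prove that $\Ind_{\h}^{\s,\chi}N$ is simple. These two cases do correspond to your signs $+1$ and $-1$, but the mechanism is not orbit size.
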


\begin{Theorem}
\label{C:chchchchchangesagain}
For all $d \in \N$ there exists $p_0 = p_0(d) \in \N$ such that if $k = \overline{k}$ is a field of characteristic $p > p_0$, if $\g \subseteq \gl_d(k)$ and $M$ is a simple $\g$-module with $p$-character $\chi$ then $M$ is a quotient of a module of the form $\Ind_{\bar \s}^{\g, \chi} (M_0)$, where $\bar \s$ is a restricted Lie subalgebra of $\g$ which contains a solvable weak polarisation of $\chi$, and $M_0$ is a one-dimensional $U_\chi(\bar \s)$-module.
\end{Theorem}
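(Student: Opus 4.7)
The plan is to apply Proposition~\ref{P:solvablepolarisationsexist} to produce a suitable solvable weak polarisation $\s$, replace it with its $p$-envelope $\bar\s$ inside $\g$, exhibit a one-dimensional simple $U_\chi(\bar\s)$-submodule of $M$, and then invoke the universal property of induction.

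First I would set $p_0 := \max\{d,\, p_1(n,d) : 1 \leq n \leq d^2\}$ with $p_1$ as in Proposition~\ref{P:solvablepolarisationsexist}. For $p > p_0$ any $\g \subseteq \gl_d(k)$ has dimension at most $d^2$, so the proposition applies to the $p$-character $\chi \in \g^*$ of $M$: there is a solvable weak polarisation $\s \subseteq \g$ whose image in $\gl_d(k)$ can be conjugated into the upper-triangular matrices.

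The technical heart is to understand the $p$-envelope $\bar\s$ of $\s$ inside the restricted Lie algebra $\g$, i.e.\ the smallest restricted Lie subalgebra of $\g$ containing $\s$. As a vector space it equals $\s + V$, where $V$ is the span of the iterated $p$-powers $x^{[p]^n}$ for $x \in \s$ and $n \geq 1$. Using the identity $\ad(x^{[p]^n}) = \ad(x)^{p^n}$ together with the $\ad(\s)$-invariance of $[\s,\s]$, one shows that both $[\s, V]$ and $[V,V]$ are contained in $[\s,\s]$, whence $[\bar\s, \bar\s] = [\s,\s]$. Consequently $\bar\s$ is solvable and still satisfies $\chi[\bar\s,\bar\s] = 0$. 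Since the inclusion $\rho \colon \bar\s \hookrightarrow \gl_d(k)$ lands in upper-triangular matrices after conjugation, $[\bar\s, \bar\s]$ is strictly upper-triangular, and $p > d$ forces $(-)^{[p]}$ to vanish on it. Hence $[\bar\s,\bar\s]$ is a $p$-ideal of $\bar\s$, and the quotient $\bar\s/[\bar\s,\bar\s]$ is an abelian restricted Lie algebra.

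The linear form $\chi|_{\bar\s}$ descends to some $\bar\chi$ on $\bar\s/[\bar\s,\bar\s]$, and the commutative finite-dimensional $k$-algebra $U_{\bar\chi}(\bar\s/[\bar\s,\bar\s])$ has a maximal ideal over the algebraically closed field $k$, producing a one-dimensional $U_\chi(\bar\s)$-module by inflation. By Lemma~\ref{L:equidim} every simple $U_\chi(\bar\s)$-module therefore has dimension one. Any simple $U_\chi(\bar\s)$-submodule $M_0 \subseteq M|_{\bar\s}$ is thus one-dimensional, and the universal property of $\Ind_{\bar\s}^{\g,\chi}(\cdot)$ yields a nonzero homomorphism $\Ind_{\bar\s}^{\g,\chi}(M_0) \to M$, which is surjective by the simplicity of $M$. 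The main obstacle is the identification $[\bar\s,\bar\s] = [\s,\s]$, because it is what ensures that $\chi$ continues to annihilate the derived subalgebra after passing to the $p$-envelope; the remainder is formal.
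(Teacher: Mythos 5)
Your proof is correct and follows essentially the same path as the paper's: the same choice of $p_0$, the same application of Proposition~\ref{P:solvablepolarisationsexist}, passage to the restricted closure $\bar\s$, the key equality $[\bar\s,\bar\s]=[\s,\s]$ (the paper cites $[\text{SF88, Prop.~2.1.3(1)}]$ plus $\ad(x^{[p]})=\ad(x)^p$, you spell out the containments $[\s,V],[V,V]\subseteq[\s,\s]$, but these amount to the same computation), the observation that $p>d$ kills the $p$-map on the strictly upper-triangular $[\bar\s,\bar\s]$, the existence of a one-dimensional quotient of $U_\chi(\bar\s)$, Lemma~\ref{L:equidim} to force every simple $U_\chi(\bar\s)$-module to be one-dimensional, and finally the universal property of $\Ind_{\bar\s}^{\g,\chi}$ together with simplicity of $M$. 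No material gap.
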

\begin{proof}
Fix $d \in \N$, let $p_0 := \max\{d, p_1(1, d), p_1(2, d),...,p_1(d^2, d)\}$ where $p_1(n,d)$ was defined in Proposition~\ref{P:solvablepolarisationsexist}, and let $k$ be an algebraically closed field of characteristic $p > p_0$. Let $\g \subseteq \gl_d(k)$ be restricted and let $\chi \in \g^*$. Thanks to Proposition~\ref{P:solvablepolarisationsexist} we know there exists a solvable weak polarisation $\s \subseteq \g$ of $\chi$ with $\s$ upper-triangularisable. Let $\bar \s$ denote the restricted closure of $\s$ in $\gl_d(k)$, which is the smallest restricted Lie subalgebra of $\gl_d(k)$ containing $\s$. Evidently we have $\s \subseteq \bar\s \subseteq \g$. Since the upper-triangular matrices form a restricted Lie subalgebra of $\gl_d(k)$ it follows that $\bar \s$ is upper-triangularisable and solvable. Since $[\bar \s, \bar \s]$ consists of strictly upper-triangularisable matrices, our assumption $p > d$ ensures that the $p$-power map vanishes identically on $[\bar \s, \bar \s]$ and so this derived ideal is restricted.
By \cite[Proposition 2.1.3(1)]{SF88}, $\bar \s$ is spanned by elements of the form $x^{[p]^m}$ where $x \in \s$ and $m\geq 0$.  Since $[x^{[p]},y]= \ad(x^{[p]})(y)= \ad(x)^p(y)$ for all $x,y\in \g$, we see that $[\bar \s, \bar \s] = [\s, \s]$.

Now $[\bar \s, \bar \s]$ is a proper restricted
ideal of $\bar \s$, so $[\bar \s, \bar \s]$ generates a proper ideal of $U_\chi(\bs)$. The quotient is a nonzero finite-dimensional abelian $k$-algebra and therefore all of its simple modules are one-dimensional. This shows that $U_\chi(\bs)$ has at least one one-dimensional module.  We deduce from Lemma~\ref{L:equidim} that every simple $U_\chi(\bs)$-module is one-dimensional. Now let $M$ be a simple $\g$-module with $p$-character $\chi$. Then we can consider the socle of $M$ with respect to $U_\chi(\bs)$. According to our previous deductions there is a one-dimensional $U_\chi(\bs)$-submodule of this socle, which we call $M_0$. Now the existence of a surjection $\Ind_{\bar \s}^{\g,\chi}(M_0) \twoheadrightarrow M$ follows from the universal property of induced modules, along with the fact that $M$ is simple.
\end{proof}

Now the proof of Theorem~\ref{mainthm} follows from the previous theorem, combined with \eqref{e:induceddimn}.

\begin{Remark}
It has been conjectured that for a restricted Lie algebra $\g$ the following are equivalent:
\begin{enumerate}
\item[(i)] $\g$ is Frobenius, meaning $\ind(\g) = 0$;
\item[(ii)] there exists a non-empty open subset $\O \subseteq \g^*$ such that $U_\chi(\g)$ is simple for all $\chi \in \O$.
\end{enumerate}
Our main theorem implies that the conjecture holds for Lie subalgebra of $\gl_d(k)$ provided $\Char(k) > p_0(d)$. In fact (i) $\Rightarrow$ (ii) holds in general thanks to \cite[Theorem~4.4]{PS99}. Conversely, supposing $U_\chi(\g)$ is simple, there is a simple $\g$-module of the maximal possible dimension $\sqrt{\dim U_\chi(\g)} = p^{\frac{1}{2}\dim \g}$. If the KW1 conjecture holds for $\g$ then $M(\g) = p^{\frac{1}{2}\dim \g} = p^{\frac{1}{2}(\dim \g - \ind\g)}$ and so $\g$ is Frobenius.
\end{Remark}

\subsection{Example: the Lie algebras of group schemes}
\label{ss:groupschemes}


In this final subsection we draw attention to families of important Lie algebras to which Theorem~\ref{mainthm} can be applied, proving Theorem~\ref{thm:groupthm} from the introduction. We recall some of the elements of the theory of algebraic group schemes, following \cite{DG:1970}, \cite{Jan03}. Throughout the subsection we fix a commutative unital ring $R$, we write $R\alg$ for the category of $R$-algebras, and we say that $k$ is an {\it $R$-field} if $k$ is both a field and an object of $R\alg$. An affine algebraic group scheme $G$ over $R$ is a functor from $R$-algebras to groups, naturally equivalent to one of the form $\Spec_R R[G] := \Hom_{R\alg}(R[G], -)$ where $R[G]$ is some $R$-algebra. We call $R[G]$ the co-ordinate ring of $G$.  We say that $G$ is of finite type over $R$ if $R[G]$ is a finitely generated $R$-algebra.  The archetypal example of an affine algebraic group scheme is $\GL_d$.  Endowing an $R$-algebra $R[G]$ with the structure of an affine algebraic group scheme is equivalent to endowing $R[G]$ with a Hopf algebra structure $(R[G], \Delta, \sigma, \epsilon)$.  Associated to $G$ we have the Lie algebra $\g= \Lie(G)$ over $R$ \cite[II.4.1.2]{DG:1970}; we may identify $\g$ with the kernel of the homomorphism $G(R[\epsilon]/(\epsilon^2))\to G(R)$ induced by the $R$-algebra homomorphism $R[\epsilon]/(\epsilon^2)\to R, \epsilon\mapsto 0$.

When $G$ is an affine group scheme over $R$ and $k$ is any $R$-algebra then we can consider the base change $G_k$, which is an affine group scheme over $k$ obtained by viewing $k$-algebras as $R$-algebras. The co-ordinate ring $k[G_k]$ of $G_k$ is given by $k[G_k]= R[G]\otimes_R k$.  We write $\g_k = \Lie(G_k)$.  Since forming the Lie algebra commutes with base change \cite[II.4.1.4]{DG:1970}, we can identify $\g_k$ with $\g\otimes_R k$.  If $G$ is of finite type over $R$ then we can write $R[G] \cong R[x_1,...,x_n]/ I$ for some $n$ and some ideal $I$ of the polynomial ring $R[x_1,...,x_n]$; we obtain a map $\omega : R[x_1,...,x_n] \rightarrow k[x_1,...,x_n]$ and we have
\begin{eqnarray}
\label{e:presentingbasechage}
G_k \cong \Spec_k k[x_1,...,x_n]/ I_k
\end{eqnarray}
for some ideal $I_k$ of $k[x_1,...,x_n]$.

\begin{Lemma}
\label{L:representationdimd}
Let $G$ be an affine algebraic group scheme of finite type over $R$. There exists $d \in \N$ depending only on $G$ such that for each $R$-field $k$ there exists a representation $\rho : G_k \rightarrow (\GL_d)_k$, with $d\rho : \g_k \rightarrow (\gl_d)_k$ faithful.
\end{Lemma}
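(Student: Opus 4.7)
The plan is to exploit the Hopf algebra structure of $R[G]$ to produce, uniformly in the $R$-field $k$, a finite-dimensional $k$-subcomodule $V_k \subseteq k[G_k]$ which generates $k[G_k]$ as a $k$-algebra; the associated representation then realises $G_k$ as a closed subgroup scheme of some $(\GL_d)_k$, with $d$ independent of $k$.

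First I would fix a presentation $R[G] = R[x_1,\ldots,x_n]/I$ and, for each generator $x_i$, expand
\[
\Delta(x_i) = \sum_{j=1}^{M_i} \alpha_{ij}\, x^{a_{ij}} \otimes x^{b_{ij}} \in R[G] \otimes_R R[G].
\]
Setting $M := \sum_{i=1}^{n} M_i$ gives an integer depending only on the Hopf structure of $R[G]$, hence only on $G$. Base-changing to $k$, the same formulas present $\bar\Delta(\bar x_i) \in k[G_k] \otimes_k k[G_k]$ as a sum of at most $M_i$ simple tensors.

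The key input is the classical existence theorem for finite-dimensional subcomodules over a field: the smallest $k$-subspace $C_i \subseteq k[G_k]$ containing $\bar x_i$ and satisfying $\bar\Delta(C_i) \subseteq C_i \otimes_k k[G_k]$ has dimension at most $M_i$. I would prove this by taking a minimal expression $\bar\Delta(\bar x_i) = \sum_{j} v_j \otimes w_j$ with the $w_j$ linearly independent in $k[G_k]$, and then using coassociativity together with linear functionals dual to the $w_j$ to show that $\operatorname{span}_k\{v_j\}$ is itself a subcomodule of dimension $\leq M_i$; it contains $\bar x_i$ by the counit identity $\bar x_i = (\id \otimes \epsilon) \bar\Delta(\bar x_i)$. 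Setting $V_k := C_1 + \cdots + C_n$ yields a subcomodule of dimension at most $M$ containing all the $\bar x_i$, which therefore generates $k[G_k]$ as a $k$-algebra.

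Finally, the subcomodule $V_k$ is equivalent to a homomorphism of affine group schemes $\rho_k : G_k \to \GL(V_k)$. Because $V_k$ generates $k[G_k]$ as an algebra, the dual Hopf algebra map $k[\GL(V_k)] \to k[G_k]$ is surjective, so $\rho_k$ is a closed immersion and its derivative $d\rho_k : \g_k \to \gl(V_k)$ is injective. Fixing a $k$-basis of $V_k$ identifies $\GL(V_k)$ with $(\GL_{d_k})_k$ for some $d_k \leq M$, and extending block-diagonally via the trivial representation on $k^{M - d_k}$ produces $\rho : G_k \to (\GL_M)_k$ whose derivative is still injective. Thus $d := M$ does the job. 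The main subtlety I anticipate is that the subcomodule construction uses $k$-linear functionals in an essential way, making it intrinsically field-theoretic; this is why I reduce to the base-changed setting rather than attempting to build such a subcomodule already over $R$.
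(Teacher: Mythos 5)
Your proposal is correct and follows essentially the same route as the paper: fix a Hopf algebra presentation $R[G]=R[x_1,\dots,x_n]/I$, bound the number of simple tensors in $\Delta(x_i)$ over $R$ to get the integer $d$, base change to $k$, and construct a finite-dimensional subcomodule of $k[G_k]$ (your $V_k$, the paper's $N=\sum N_i$) containing the images of the generators, then pad by a trivial summand. The only cosmetic difference is that you deduce injectivity of $d\rho$ directly from the fact that $V_k$ generates $k[G_k]$ and hence $\rho$ is a closed immersion, whereas the paper invokes the snake diagram comparing the kernels of $G_k(k[\epsilon]/(\epsilon^2))\to G_k(k)$ and $\GL_N(k[\epsilon]/(\epsilon^2))\to\GL_N(k)$; both are standard and correct.
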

\begin{proof}
Suppose that $G$ corresponds to the Hopf algebra $(R[G], \Delta, \sigma, \epsilon)$, with $R[G] = R[x_1,...,x_n] / I$. Fix $i \in \{1,...,n\}$, write $\Delta(x_i) = \sum_{j=1}^{r(i)} f_{i, j}^{(1)} \otimes f_{i,j}^{(2)}$ and define $d := \sum_{i=1}^n r(i)$. Choose any $R$-field $k$ and let $M$ be the $R$-module generated by elements $\{f_{i,j}^{(1)} \mid i=1,...,n, j=1,...,r(i)\}$. Write $\omega : R[G] \to k[G_k]$ for the natural homomorphism.

Thanks to \eqref{e:presentingbasechage} there is a surjection $M \otimes_R k \twoheadrightarrow \omega(M)$ and so $\omega(M)$ identifies with a subspace of $k[G_k]$ of dimension $\leq d$. We observe that the coproduct $\Delta(\omega(x_i)) = \sum_{i=1}^{r(i)} \omega(f_{i,j}^{(1)}) \otimes \omega(f_{i,j}^{(2)})$  can be rewritten in the form $\Delta(\omega(x_i)) = \sum_{i=1}^{r_k(i)} h_{i,j} \otimes \omega(f_{i,j}^{(2)})$  for some $r_k(i) \le r(i)$ and certain elements $h_{i,j} \in \omega(M)$, such that $\omega(f_{i,1}^{(2)}),...,\omega(f_{i, r_k(i)}^{(2)})$ are $k$-linearly independent. According to \cite[I.2.13(4)]{Jan03} the space $N_i := \sum_{j=1}^{r_k(i)} k h_{i,j}$ is a $G_k$-submodule of $\omega(M)$ containing $x_i$. Furthermore it follows from \cite[Proof of Prop.~4.7]{Mil17} that $N = \sum_{i=1}^n N_i$ is a faithful $G_k$-submodule of $k[G_k]$ of dimension $\leq d$. Therefore $N \oplus k^{\oplus (d - \dim N)}$ is a faithful module of dimension $d$. Finally observe that $N$ is a faithful $\g_k$-module. To see this note, for example, that by faithfulness $1\to G(k[\epsilon]/(\epsilon^2))\to \GL_N(k[\epsilon]/(\epsilon^2))$ is exact; also $\Lie(G)\cong\ker (G(k[\epsilon]/(\epsilon^2))\to G(k))$ for the map $\epsilon\mapsto 0$, and similarly for $\GL_N$. Hence the claim follows from the commutativity of the following diagram \cite[II.4.1.3]{DG:1970}:
\[\begin{CD}0 @>>> \g_k @>>> G_k(k[\epsilon]/(\epsilon^2))\\
 @. @VVV @VVV \\
0 @>>> \gl_N @>>> \GL_N(k[\epsilon]/(\epsilon^2))
\end{CD}\qedhere\]
\end{proof}

\begin{Remark} Given a faithful representation $\sigma\colon G\to \GL_d$, we obtain by base change for any $R$-field $k$ a representation $\sigma_k\colon G_k\to (\GL_d)_k$ such that $d\sigma_k$ is faithful.  The question, however, of when such $\sigma$ exists is rather subtle---this is not known even when $R$ is the ring of dual numbers over a field $k$ and $G$ is flat and of finite type over $R$. (On the other hand, the existence of such a representation is known when $G$ is flat and of finite type over a Dedekind domain $R$, such as $\Z$, or indeed if $R$ is any field.)  Fortunately we don't need such a $\sigma$: any family of representations $\rho\colon G_k\to (\GL_d)_{k_p}$ for $p$ prime will do because we quantify over all representations simultaneously.
\end{Remark}


\begin{proof}[Proof of Theorem~\ref{thm:groupthm}]
 This follows immediately from Theorem~\ref{mainthm} and Lemma~\ref{L:representationdimd}.
\end{proof}
\begin{Example}
 Fix a Dynkin type and a non-negative integer $r$.  There exists a prime $p_0$ depending only on $r$ and the Dynkin type with the following property.  Suppose $G$ is a connected algebraic reductive group over an algebraically closed field $k$ of characteristic $p> 0$ such that $[G,G]$ has the given Dynkin type and $Z:= Z(G)^0$ has dimension $v$.  If $p> p_0$ then for any restricted Lie subalgebra $\h$ of $\g:= \Lie G$, the first Kac-Weisfeiler conjecture holds for $\h$.  To see this, note first that if $p$ is very good then $\Lie [G,G]$ has trivial centre, so
 any isogeny of connected reductive groups with domain or codomain equal to $G$ is separable and therefore gives rise to an isomorphism of restricted Lie algebras.  Hence without loss we can take $G$ to be of the form $[G,G]\times Z$, where $Z$ is a torus of dimension $r$ and $[G,G]$ is adjoint.  Then $G$ admits a faithful representation of dimension $\dim G+ r$ (just take the direct sum of the adjoint representation of $[G,G]$ and a faithful $r$-dimensional representation of $Z$).  This yields a faithful restricted representation of $\g$, and hence of $\h$.  The assertion now follows from Theorem~\ref{mainthm}.
 
 In particular, the result applies when $\h$ is a parabolic subalgebra of $\g$, or when $\h$ is the Lie algebra centraliser of some (possibly nonsmooth) subgroup scheme $H$ of $G$---note that in the latter case, $\h$ is the Lie algebra of the scheme-theoretic centraliser $C_G(H)$, so $\h$ is restricted.  In the special case $\h= \g$, it has been known for some time that the first Kac-Weisfeiler conjecture holds, at least when $p>2$ (see \cite[Section~4.1]{PS99}).
\end{Example}

\subsection{Example: families of non-algebraic Lie algebras}
\label{sec:notres}
Let $\g$ be a Lie algebra over a commutative unital ring $R$ such that $\g$ is finitely generated as an $R$-module.  If $k$ is an algebraically closed $R$-field then base change yields a finite-dimensional Lie algebra $\g_k$ over $k$.  It is natural to ask whether the equality (\ref{e:stateKW1}) from the first Kac-Weisfeiler conjecture holds when $\Char(k)\gg 0$.  In this final subsection we show that the answer is no; to see this we construct a Lie algebra $\g$ over the Gaussian integers $\Z[i]$ such that the Lie algebras $\g_p$ obtained by base change from $\Z[i]$ to $k_p$ exhibit some striking behaviour, quite different from the situation for Lie algebras of group schemes. (Here $k_p$ denotes the algebraic closure of the finite field ${\mathbb F}_p$.)  The underlying reason is that for infinitely many $p$, $\g_p$ does not admit a restricted structure.  This cannot happen in the setting of Theorem~\ref{thm:groupthm} because the Lie algebra of the affine group scheme $G_k$ is automatically restricted.

Let $\g$ be the free $\Z[i]$-module with basis $\{h,x,y\}$ and equip it with a $\Z[i]$-linear Lie algebra structure by defining brackets $[h,x] = x, [h,y] = iy, [x,y] = 0$. Since $\Z \subseteq \Z[i]$ is an integral extension, the going-up theorem implies that for every prime number $p \in \N$ there is a prime ideal $\p_p \in \Spec(\Z[i])$ such that $\Z[i]/\p_p$ has characteristic $p$. Since $\Z[i]$ is a principal ideal domain it follows that these quotients are fields, and we may write $k_p$ for the algebraic closure of $\Z[i]/\p_p$. Now define $\g_p := \g \otimes_{\Z[i]} k_p$. Note that $\g$ has a faithful representation in $\GL_4(\Z[i])$ given by
$$ h\mapsto \left(\begin{array}{cccc} 1&0&0&0\\0&-1&0&0\\0&0&i&0\\0&0&0&-i\end{array}\right),\ \ x\mapsto \left(\begin{array}{cccc} 0&1&0&0\\0&0&0&0\\0&0&0&0\\0&0&0&0\end{array}\right),\ \ y\mapsto \left(\begin{array}{cccc} 0&0&0&0\\0&0&0&0\\0&0&0&1\\0&0&0&0\end{array}\right), $$
and this gives rise by base change to a faithful representation of $\g_p$ in $\GL_4(k_p)$ for every prime $p$.  Since the example is elementary enough we can actually describe the representation theory explicitly.
\begin{Proposition}
\label{prop:notres}
Suppose that $p > 2$. The following are equivalent:
\begin{enumerate}
\item $M(\g_p) = p^{\frac{1}{2}(\dim(\g_p) - \ind(\g_p))}$;
\item $\g_p$ is a restricted Lie algebra;
\item $p \equiv 1$ modulo 4.
\end{enumerate}
\end{Proposition}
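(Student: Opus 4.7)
Since $\dim \g_p = 3$ and the skew form $B_\chi$ on the basis $\{h,x,y\}$ has rank $2$ whenever $\chi(x) \neq 0$ or $\chi(y) \neq 0$, we have $\ind(\g_p) = 1$, so condition (1) reduces to the assertion $M(\g_p) = p$. My plan is to establish that (2) and (3) are equivalent by a direct test of the restrictedness criterion, to deduce that (2) implies (1) by a standard dimension argument, and finally to prove that the failure of (3) forces the failure of (1) by exhibiting a simple $\g_p$-module of dimension $p^2$.

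For the equivalence of (2) and (3), I will check when $(\ad z)^p$ lies in $\ad(\g_p)$ on each basis element. Because $(\ad x)^2 = (\ad y)^2 = 0$, the choices $x^{[p]} = y^{[p]} = 0$ will serve and these cases present no obstruction. However, $\ad h$ acts diagonally on $\{h,x,y\}$ with eigenvalues $0, 1, i$, so $(\ad h)^p$ has eigenvalues $0, 1, i^p$. Writing a candidate $z' = ah + bx + cy$ and expanding $\ad z'$ via the given brackets, one reads off that $(\ad h)^p \in \ad(\g_p)$ if and only if $i^p = i$ in $k_p$, which by Fermat's little theorem holds exactly when $p \equiv 1 \pmod 4$.

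For (2) $\Rightarrow$ (1), suppose $\g_p$ is restricted. Each reduced enveloping algebra $U_\chi(\g_p)$ has $k$-dimension $p^3$, so every simple $\g_p$-module $V$ satisfies $(\dim V)^2 \leq p^3$; and since $\dim V$ is a power of $p$, this forces $\dim V \leq p$. The matching lower bound $M(\g_p) \geq p^{(\dim\g_p - \ind\g_p)/2} = p$ is provided by the Premet--Skryabin inequality \cite{PS99}, which gives the equality $M(\g_p) = p$.

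Finally, for the implication $\neg(3) \Rightarrow \neg(1)$, assume $p \equiv 3 \pmod 4$, so that $i \in k_p \setminus \mathbb{F}_p$. Fix $\alpha, \beta \in k_p^\times$ and $\lambda \in k_p$, and let $V$ be the $k_p$-vector space with basis $\{e_{j,k} : j,k \in \mathbb{F}_p\}$, equipped with actions
\[
h \cdot e_{j,k} = (\lambda + j + ki)\, e_{j,k}, \quad x \cdot e_{j,k} = \alpha\, e_{j+1, k}, \quad y \cdot e_{j,k} = \beta\, e_{j, k+1},
\]
where the indices are taken modulo $p$. The bracket relations of $\g_p$ are verified by a routine calculation. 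The main step requiring care is the simplicity of $V$: it will follow because the $\mathbb{F}_p$-linear independence of $1$ and $i$ in $k_p$ makes the $p^2$ eigenvalues $\lambda + j + ki$ of $h$ pairwise distinct, so every nonzero submodule of $V$ contains some $e_{j,k}$ by a Vandermonde argument and is then forced to contain all basis vectors through the cyclic actions of $x$ and $y$. Hence $M(\g_p) \geq p^2 > p$ and (1) fails. The whole argument hinges on the arithmetic dichotomy that $1$ and $i$ are $\mathbb{F}_p$-independent in $k_p$ precisely when $p \equiv 3 \pmod 4$.
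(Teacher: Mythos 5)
Your proof is correct and reaches the same conclusions, but by a genuinely different route in the two steps concerning $M(\g_p)$. For (2)$\Leftrightarrow$(3) you and the paper both verify Jacobson's criterion on the basis $\{h,x,y\}$; your attribution of ``$i^p = i$ iff $p\equiv 1 \pmod 4$'' to Fermat's little theorem is slightly loose (Fermat gives only the forward direction once $i \in \mathbb{F}_p$; the equivalence really comes from $i$ having multiplicative order $4$ in $\overline{\mathbb{F}}_p$), but the conclusion is right. The real divergence is in the two bounds on $M(\g_p)$. For (2)$\Rightarrow$(1) the paper invokes Zassenhaus's identity $M(\g_p)^2 = [D_p:Q_p]$ and exhibits explicit central elements $h^p-h$ and $x^ly^m$ with $l+im=0$ to bound the rank of $U(\g_p)$ over its centre by $p^2$; your argument via $(\dim V)^2 \le \dim U_\chi(\g_p) = p^3$ is shorter and more elementary, though you should apply the power-of-$p$ fact to $M(\g_p)$ itself (where Zassenhaus's theorem applies) rather than asserting it for every simple $V$, since the latter is essentially the open second Kac--Weisfeiler conjecture. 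For $\neg(3)\Rightarrow\neg(1)$ the paper computes $Z(\g_p)=k[x^p,y^p,h^{p^2}-h]$ and applies Zassenhaus once more to obtain $M(\g_p)=p^2$ exactly, whereas you directly construct a $p^2$-dimensional simple module and verify simplicity from the pairwise distinctness of the $h$-eigenvalues $\lambda+j+ki$, which rests on the $\mathbb{F}_p$-linear independence of $1,i$ when $p\equiv 3\pmod 4$. Your construction is more concrete and sidesteps the centre computation entirely; it yields only the inequality $M(\g_p)\geq p^2$ rather than equality, but that is all that is needed to falsify (1).
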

\begin{proof}
By Gauss' law of quadratic reciprocity we know that $i \in \mathbb{F}_p \subseteq k_p$ if and only if $p \equiv 1$ modulo 4. If this is the case then $\ad(h)^p = \ad(h)$ and it follows quickly that $\ad(h)^p, \ad(x)^p, \ad(y)^p \in \ad(\g_p)$. By Jacobson's theorem \cite[Theorem~2.2.3]{SF88} we deduce that $\g_p$ is restricted. Conversely, if $p \equiv 3$ modulo 4 then $i \notin \mathbb{F}_p$ and so  $\ad(h)^p \notin \ad(\g_p)$. Therefore by {\it loc.\ cit.}\ the algebra $\g_p$ is not restricted. Thus we see (2) $\Leftrightarrow$ (3).

Since $\dim(\g_p) - \ind(\g_p)$ is even we conclude that $\ind(\g_p) \in \{1,3\}$.
Clearly if $\chi[z, \g_p] = 0$ for all $z \in \g_p$ then $\chi(x)= \chi(y) = 0$, and so $\ind(\g_p) = 1$ for all $p > 0$.  Let $D_p$ denote the division ring of fractions of $U(\g_p)$ and let $Q_p$ denote the division ring of the centre $Z(\g_p)$ of $U(\g_p)$. Then $D_p$ is a $Q_p$-vector space and, thanks to \cite{Zas54}, we have $M(\g_p)^2 = [D_p : Q_p]$. Supposing $p \equiv 1$ modulo 4, so that $i \in \mathbb{F}_p$, we have central elements $h^p - h$ and $x^l y^m$ where $l, m \in \Z$ are any integers such that 
$l + im = 0 \in \mathbb{F}_p$. It follows that $U(\g_p)$ is generated as a $Z(\g_p)$-module by $\leq p^2$ elements. We deduce that $M(\g_p)^2 = [D_p : Q_p] \leq p^2$ and so $M(\g_p) \leq p$. By \cite[Remark~5.4, (1)]{PS99} it follows that $M(\g_p) = p$, since $\g_p$ is restricted. From $\frac{1}{2}(\dim(\g_p) - \ind(\g_p)) = 1$ it follows that (3) $\Rightarrow$ (1). Now suppose that $p \equiv 3$ modulo 4. A very explicit calculation shows that $Z(\g_p)$ is a polynomial algebra generated by $\{x^p, y^p, h^{p^2} - h\}$ and so $U(\g_p)$ is a free $Z(\g_p)$-module of rank $p^4$. It follows that $M(\g_p)^2 = [D_p : Q_p] = p^4$ and so $M(\g_p) \neq p^{\frac{1}{2}(\dim(\g_p) - \ind(\g_p))} = p$ in this case, whence (1) $\Rightarrow$ (3). This concludes the proof.
\end{proof}

\section*{Appendix by Akaki Tikaradze}

 In this appendix a short, alternative proof of Theorem~\ref{thm:groupthm} is presented, which applies to group schemes defined over a finitely generated ring $R \subseteq \C$. As was noted previously, in order to prove the KW1 conjecture for a restricted Lie algebra $\g$ it suffices to demonstrate that $M(\g) \le p^{\frac{1}{2}(\dim \g - \ind \g)}$. Thanks to the work of Zassenhaus this is equivalent to showing that the dimension of the skew division ring $D(\g)$ of fractions of $U(\g)$ is a field extension of rank $\le p^{\dim \g - \ind \g}$ over its centre (see Lemma~\ref{equivalentforms} for more detail); this shall be proven by combining Rosenlicht's theorem with reduction modulo $p$ and deformation arguments. As a by-product of the proof, a description of the centre of $D(\g)$ is obtained which confirms a slight modification of a conjecture of Kac \cite{KPre}.
 
Given a commutative (noncommutative Noetherian) domain $R$, denote by $\Frac(R)$ its quotient field (skew field) of fractions. If $R$ is commutative and $M$ is any $R$-module and $S$ is an $R$-algebra write $M_S := M\otimes_R S$ and, when $R$ is also an integral domain and $M$ is finitely generated, we refer to $\dim_{\Frac(R)} M_{\Frac(R)}$ as {\em the rank of $M$}.

Continue to write $k$ for a field of characteristic $p > 0$. Given a restricted Lie algebra $\mathfrak{g}$ over $k$ recall that $Z_p(\g)$ denotes the $p$-centre of $\g$. Furthermore, whenever $G$ is an $R$-group scheme and $k$ is an $R$-field write $\g_k$ for the Lie algebra of the scheme $G_k$ obtained by base change. 
\begin{Theorem}\label{main}
Let $R\subset\mathbb{C}$ be a finitely generated ring, let $G$ be an $R$-group scheme and $R \to k$ a base change to an algebraically closed field. Then provided $\Char(k) = p \gg 0$ the KW1 conjecture holds for $\g_k.$

\end{Theorem}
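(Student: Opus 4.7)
First, by Zassenhaus's theorem $M(\g_k)^2 = [D(\g_k) : Z(D(\g_k))]$, where $D(\g_k) := \Frac U(\g_k)$; combined with the Premet--Skryabin lower bound recalled in the introduction, KW1 for $\g_k$ reduces to showing $[D(\g_k) : Z(D(\g_k))] \le p^{\dim \g_k - \ind \g_k}$. Since $U(\g_k)$ is free of rank $p^{\dim \g_k}$ over its $p$-centre $Z_p(\g_k) \subseteq Z(U(\g_k))$, one has $[D(\g_k) : \Frac Z_p(\g_k)] = p^{\dim \g_k}$, so the task reduces via the tower formula to producing a subfield $L \subseteq Z(D(\g_k))$ containing $\Frac Z_p(\g_k)$ with $[L : \Frac Z_p(\g_k)] \ge p^{\ind \g_k}$.

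Second, I would pass to the generic fibre $\g_{\C} := \g_R \otimes_R \C$ and apply Rosenlicht's theorem to the coadjoint action of $G(\C)^{0}$ on $\g_{\C}^*$: this yields $\trdeg_{\C} \C(\g_{\C}^*)^{G(\C)^{0}} = r$, where $r := \ind \g_{\C}$. Classical results (Dixmier for solvable $\g$, Ooms and Vinberg in general) give $\trdeg_{\C} Z(\Frac U(\g_{\C})) = r$, so one can select algebraically independent central elements $z_1, \dots, z_r \in Z(\Frac U(\g_{\C}))$. After enlarging $R$ by inverting finitely many nonzero elements, each $z_i$ is represented by an element of $\Frac U(\g_R)$ whose denominator remains nonzero modulo $p$ for all $p \gg 0$, so the specialisations $\bar z_1, \dots, \bar z_r \in Z(D(\g_k))$ are well-defined. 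A constructibility argument applied to the rank of the bracket matrix of $B_\chi$ also forces $\ind \g_k = r$ for $p \gg 0$.

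The main obstacle lies in the final step: one must show that $L := \Frac Z_p(\g_k)(\bar z_1, \dots, \bar z_r) \subseteq Z(D(\g_k))$ has degree at least $p^r$ over $\Frac Z_p(\g_k)$. The idea is to pass to associated graded with respect to the PBW filtration. Each $\bar z_i$ acquires a principal symbol $\phi_i \in \Frac S(\g_k)^{\g_k}$ which is the reduction mod $p$ of the symbol of $z_i$, and by construction the $\phi_i$ are algebraically independent over $\C$. For $p \gg 0$ they remain $p$-independent over the Frobenius subring $\gr Z_p(\g_k) = k[(\g_k^*)^{(1)}]$, which can be verified at the level of K\"ahler differentials using semicontinuity over $R$. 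This $p$-independence forces the $\phi_i$ to generate a purely inseparable extension of $\Frac k[(\g_k^*)^{(1)}]$ of degree exactly $p^r$, and lifting back from the associated graded to $L$ preserves this degree by freeness of $U(\g_k)$ over $Z_p(\g_k)$. Hence $[L : \Frac Z_p(\g_k)] \ge p^r$, completing the proof; as a byproduct, equality must hold throughout, identifying $Z(D(\g_k))$ with $\Frac Z_p(\g_k) \cdot k(\bar z_1, \dots, \bar z_r)$ and confirming the refined form of Kac's conjecture referred to in the introduction of the appendix.
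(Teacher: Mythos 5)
Your proposal follows the same overall strategy as Tikaradze's appendix: reduce KW1 via Zassenhaus and Premet--Skryabin to a rank estimate (Lemma~\ref{equivalentforms}), invoke Rosenlicht's theorem over $\C$ to extract $m=\ind \g_\C$ algebraically independent rational invariants, reduce them modulo $p$ to central fractions in $D(\g_k)$, and close by passing to the associated graded. The one genuinely different ingredient is the final commutative-algebra estimate: the paper uses Lemma~\ref{rank}, a Noether-normalization and generator-counting argument showing $\operatorname{rank}_{A_k^p B_k} A_k = p^{\dim A - \dim B}$, whereas you replace it by a $p$-independence/K\"ahler-differentials criterion. That route is viable (since $k$ is perfect, linear independence of $d\phi_1,\dots,d\phi_m$ in $\Omega_{k(\g_k^*)/k}$ is precisely $p$-independence, and the relevant Jacobian minor is a nonzero rational function over a localization of $R$, hence nonzero mod $p$ for $p\gg 0$); what Lemma~\ref{rank} buys in exchange is a more elementary, self-contained statement that does not need you to make sense of differentials of rational functions over $R$.

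Two places in your sketch need tightening before it is a proof. First, the ``principal symbol of $\bar z_i$'' is not literally defined when $\bar z_i$ is a fraction in $D(\g_k)$; the paper sidesteps this by clearing denominators --- choosing $z_i$ with $\psi_i z_i, \phi_i z_i\in Z(U(\g_k))$, taking honest symbols of those, and only then forming the quotient $f_i/g_i$ inside $\Frac\gr Z(U(\g_k))$. Second, ``lifting back from the associated graded to $L$ preserves this degree by freeness of $U(\g_k)$ over $Z_p(\g_k)$'' is not an adequate justification: freeness over $Z_p$ by itself does not transfer a degree computed at the $\gr$ level back to $[L:\Frac Z_p(\g_k)]$, especially when $L$ is generated by fractions. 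The correct tool is Lemma~\ref{Gr}, a statement about ranks of finitely generated filtered modules, and it applies most cleanly if you estimate $[D(\g_k):Z(D(\g_k))]$ from above (equivalence (ii) of Lemma~\ref{equivalentforms}) rather than $[\Frac Z(\g_k):\Frac Z_p(\g_k)]$ from below (equivalence (iii)): one shows $\Frac\gr Z(U(\g_k))\supseteq A_k^p B_k$ and then bounds $\operatorname{rank}_{\gr Z} S(\g_k)$ via Lemmas~\ref{rank} and~\ref{Gr}. Reorienting your argument this way makes the $\gr$ step rigorous; as written, it is a gap.
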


As a consequence of the proof of Theorem \ref{main}, we also show the following.
\begin{Theorem}\label{center}
Let $G$ be a group scheme over a finitely generated ring $R \subseteq \C$.
Then provided $\Char(k) = p \gg 0$ the center of $D(\g_k)$ is generated as a field by $Z_p(\mathfrak{g}_k)$ along with central elements obtained by reduction modulo $p$.
\end{Theorem}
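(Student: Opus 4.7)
The plan is to leverage Theorem~\ref{main}. Granting that the first Kac--Weisfeiler conjecture holds for $\g_k$ when $\Char(k) = p \gg 0$, Zassenhaus's identity $M(\g_k)^2 = [D(\g_k) : Z(D(\g_k))]$ together with the equality $[D(\g_k) : \Frac(Z_p(\g_k))] = p^{\dim\g}$ (since $U(\g_k)$ is free of rank $p^{\dim\g}$ over $Z_p(\g_k)$) forces
\[
[Z(D(\g_k)) : \Frac(Z_p(\g_k))] \;=\; p^{\ind\g}.
\]
Thus, writing $r := \ind(\g_\C)$, it will suffice to produce $r$ reductions modulo $p$ of characteristic-zero central elements that generate a subfield of degree $\geq p^r$ over $\Frac(Z_p(\g_k))$; combined with $\ind(\g_k) = \ind(\g_\C) = r$ for $p\gg 0$ (by spreading out the polynomial conditions witnessing the index), this degree estimate is forced to be an equality and the subfield must coincide with $Z(D(\g_k))$.

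To construct the elements I would apply Rosenlicht's theorem to choose algebraically independent Poisson-central polynomials $\phi_1,\ldots,\phi_r \in \C[\g_\C^*]^{G_\C}$ forming a transcendence basis of $\C(\g_\C^*)^{G_\C}$, and lift them via the Duflo isomorphism to central elements $c_i \in Z(U(\g_\C))$ with principal symbol $\sigma(c_i) = \phi_i$. After enlarging $R$ to a finitely generated subring $R' \subseteq \C$ we may assume $c_i \in Z(U(\g_{R'}))$ and $\phi_i \in S(\g_{R'})^{G_{R'}}$; for $p$ avoiding finitely many bad primes the reductions $\bar c_i \in Z(U(\g_k))$ are well-defined with symbols $\bar\phi_i \in k[\g_k^*]^{G_k}$.

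The heart of the proof is a Jacobian criterion followed by a graded-to-filtered lift. Algebraic independence of the $\phi_i$ means some $r\times r$ minor of $(\partial\phi_i/\partial x_j)$ is a nonzero polynomial in $R'[\g_{R'}^*]$; for $p\gg 0$ this minor remains nonzero after reduction, so the differentials $d\bar\phi_1,\ldots,d\bar\phi_r$ are $k(\g_k^*)$-linearly independent in $\Omega^1_{k(\g_k^*)/k(\g_k^{*(1)})}$. By the standard Jacobian criterion for purely inseparable extensions, this is equivalent to $\bar\phi_1,\ldots,\bar\phi_r$ being $p$-independent over $k(\g_k^{*(1)})$, and hence to $k[\g_k^{*(1)}][\bar\phi_1,\ldots,\bar\phi_r]$ being a free $k[\g_k^{*(1)}]$-module of rank $p^r$ on the monomials $\bar\phi^{\mathbf{i}} := \bar\phi_1^{i_1}\cdots\bar\phi_r^{i_r}$ for $0\le i_j < p$. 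To transfer this to $U(\g_k)$ I would use the PBW filtration, under which $\gr Z_p(\g_k) = k[\g_k^{*(1)}]$ and $\gr \bar c_i = \bar\phi_i$: a hypothetical nontrivial $Z_p(\g_k)$-linear relation $\sum_{\mathbf{i}} z_{\mathbf{i}} \bar c^{\mathbf{i}} = 0$ would, by commutativity of the $\bar c_i$ and of $Z_p(\g_k)$ (so that principal symbols behave multiplicatively on each term), produce on passage to the top filtration degree a nontrivial $k[\g_k^{*(1)}]$-linear relation among the $\bar\phi^{\mathbf{i}}$, contradicting the previous paragraph. Hence $\Frac(Z_p(\g_k))(\bar c_1,\ldots,\bar c_r)$ has degree $\geq p^r$ and therefore equals $Z(D(\g_k))$.

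The main obstacle will be the graded-to-filtered lifting step: one must be careful that different monomials $\bar c^{\mathbf{i}}$ can live in different PBW degrees, so the cancellation analysis has to be done by isolating the top total degree of a putative relation rather than naively comparing term-by-term. Secondary but essential is arranging that the various exceptional sets of primes — those where the Duflo lift fails to descend from $R'$ to $k$, where the chosen Jacobian minor reduces to zero, and where $\ind(\g_k)$ could drop below $\ind(\g_\C)$ — are all finite and can be absorbed into a single threshold $p > p_0$, which is exactly what working over a fixed finitely generated $R' \subseteq \C$ throughout provides.
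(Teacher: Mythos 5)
Your proposal has a genuine gap at its starting point. You invoke Rosenlicht's theorem to ``choose algebraically independent Poisson-central \emph{polynomials} $\phi_1,\ldots,\phi_r \in \C[\g_\C^*]^{G_\C}$ forming a transcendence basis of $\C(\g_\C^*)^{G_\C}$'', but Rosenlicht's theorem only tells you that the field of \emph{rational} invariants has transcendence degree $r = \ind(\g_\C)$. It does not say that this field is the fraction field of the ring of polynomial invariants, and in general that is false for non-reductive Lie algebras. The Remark at the end of the appendix is exactly a counterexample to your first step: for $\g$ with $[h,x]=nx$, $[h,y]=my$, $(n,m)=1$, the centre of $U(\g_\C)$ is trivial (so the Duflo isomorphism gives you nothing to reduce mod $p$, since $S(\g_\C)^{G_\C} = \C$), yet $\ind(\g_\C)=1$ and $Z(D(\g_\C))$ is generated by the rational function $x^m y^{-n}$. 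The whole point of the theorem, and the reason the paper phrases it as ``central elements obtained by reduction modulo $p$'' (living in $D(\g_k)$, not in $U(\g_k)$), is that one must work with ratios of semi-invariants $\phi_i/\psi_i$, not with honest Casimir polynomials.

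Once this is repaired, a second step of your proposal needs more work than you allow. Because the $\phi_i/\psi_i$ are not polynomials, the Jacobian/$p$-independence criterion cannot be applied directly inside $k[\g_k^{*(1)}]$; you would have to pass to the localisation $A = S_R(\g)[g_1^{-1},\ldots,g_m^{-1}]$ (or an analogous Rees-type argument) before the freeness/rank count makes sense, which is precisely what Lemma~\ref{rank} of the paper does via Noether normalisation. The paper also does not separate Theorem~\ref{center} from Theorem~\ref{main}: the rank computation for $A_k$ over $A_k^p B_k$ simultaneously yields KW1 and the description of $Z(D(\g_k))$, whereas you treat KW1 as an input. That logical reorganisation is harmless, but the reliance on polynomial invariants and the Duflo map is not, and needs to be replaced by the semi-invariant/symmetrisation construction with an explicit handling of denominators.
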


For the proof we will need to recall couple of simple lemmas from commutative algebra.
\begin{Lemma}\label{rank}
Let $R\subset\mathbb{C}$ be a finitely generated ring.
Let $A$ be a finitely generated commutative algebra over $R$ such that $A_{\mathbb{C}}$ is a domain.
 Let $B\subset A$ be a finitely generated  $R$-subalgebra.
Then for all $p\gg 0$ and any base change $R\to k$ to an algebraically closed field of characteristic $p$
the rank of $A_k$ over $B_kA_k^p$ is  $p^{\dim(A)-\dim(B)},$ where $\dim(A)$ and $\dim(B)$ denote the Krull dimension of $A$ and $B$ respectively.
\end{Lemma}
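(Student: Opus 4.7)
The plan is to reduce the claim to a computation of the degree of a finitely generated field extension in characteristic $p$, and then invoke the standard formula relating this degree to transcendence degree for separable extensions. The main work is a spreading out argument transferring information from the generic (characteristic zero) fibre to fibres of large positive characteristic.

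First I would spread out. After inverting finitely many elements of $R$, one may assume that: (i) $A$ and $B$ are flat $R$-algebras with $A_\mathbb{C}$ a domain, hence $B_\mathbb{C}$ is also a domain; (ii) for every $R$-field $k$ of sufficiently large characteristic, $A_k$ and $B_k$ are integral domains of Krull dimensions $\dim(A) - \dim(R)$ and $\dim(B) - \dim(R)$ respectively, so that $\dim(A_k) - \dim(B_k) = \dim(A) - \dim(B)$; and (iii) the field extension $\Frac(A_\mathbb{C})/\Frac(B_\mathbb{C})$ is (trivially) separably generated, and a choice of separating transcendence basis coming from $A$ yields, after reduction modulo $p$, a separating transcendence basis of $\Frac(A_k)/\Frac(B_k)$ for all $p \gg 0$. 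Parts (i) and (ii) are standard consequences of generic flatness and spreading out of integrality; (iii) combines automatic separability in characteristic zero with the finiteness of the witnessing polynomial conditions (algebraic independence and non-vanishing of a relevant discriminant).

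Next I would pass to the generic point. Since $A_k$ is a finitely generated $k$-algebra and $a^p \in A_k^p$ for every $a \in A_k$, the inclusion $A_k^p \hookrightarrow A_k$ is module-finite, hence so is $B_kA_k^p \hookrightarrow A_k$. The rank of $A_k$ as a $B_kA_k^p$-module therefore equals the degree of the fraction field extension $[\Frac(A_k) : \Frac(B_kA_k^p)]$. Setting $K := \Frac(A_k)$ and $L := \Frac(B_k)$, one has the identifications $\Frac(A_k^p) = K^p$ and $\Frac(B_kA_k^p) = L \cdot K^p$ (using that every $a/b \in K$ can be written as $(ab^{p-1}) \cdot b^{-p}$), reducing the problem to computing $[K : LK^p]$. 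Since $k$ is perfect we have $k \subseteq K^p$, and by step (iii) the extension $K/L$ is separably generated of transcendence degree $\trdeg(K/L) = \dim(A) - \dim(B)$. A separating transcendence basis $t_1, \ldots, t_d$ is then automatically a $p$-basis of $K$ over $L$, and the general theory of purely inseparable extensions of exponent one yields $[K : LK^p] = p^d$, as required.

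The main obstacle is the separability clause in step (iii): without the separability of $K/L$ one has only the inequality $[K : LK^p] \geq p^{\trdeg(K/L)}$, with strict inequality genuinely possible (the standard example being $k(t)/k(t^p)$, where $L = K^p$, $\trdeg(K/L) = 0$, yet $[K : LK^p] = p$). The hypothesis $R \subseteq \mathbb{C}$ is precisely what allows us to descend the trivially separable characteristic zero situation to $p \gg 0$ via spreading out a separating transcendence basis and a primitive element together with its separable minimal polynomial.
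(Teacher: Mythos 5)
Your argument is correct, but it takes a genuinely different route from the paper's. You reduce the rank computation to the field-theoretic statement that for a finitely generated separable extension $K/L$ of fields of characteristic $p$ over a perfect base, one has $[K : LK^p] = p^{\trdeg(K/L)}$, and then spread out the separability (which is automatic over $\mathbb{C}$) to large $p$ by tracking a separating transcendence basis and a nonvanishing discriminant. The paper instead applies Noether normalization relative to $B$ to write $A$ as a finite module over $B[x_1,\ldots,x_n]$ on $m$ generators, with $n = \dim A - \dim B$, and then for $p > m$ runs a $p$-adic valuation count: the rank of $A_k$ over $B_kA_k^p$ is a power of $p$ dividing $p^n l$ with $l \le m < p$, while the rank of $B_kA_k^p$ over $B_k[x_1^p,\ldots,x_n^p]$ is also $\le m < p$, forcing the power to be exactly $p^n$. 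In that count, separability never appears explicitly --- the fact that the residual degree $l$ is less than $p$ (hence prime to $p$) does the same work, so the paper gets separability for free rather than having to spread it out as a separate step. Your version buys conceptual clarity (it pinpoints separable generation and Mac Lane's criterion as the mechanism), at the cost of a more delicate spreading-out argument: you need to preserve not only integrality and dimension of the fibres but also the algebraic independence of the chosen transcendence basis and the nonvanishing of a discriminant for a primitive element, none of which the paper needs to discuss because the Noether normalization is carried along directly. Both are legitimate proofs; the paper's is more self-contained and combinatorial, yours is closer to the standard treatment of imperfection degree.
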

\begin{proof}
By localizing $B$ if necessary, we may assume by the Noether  normalization lemma  that
$A$ is a finite module over $B[x_1,..., x_n]$, where $ x_1,...x_n\in A$ are algebraically independent over $B.$
Let $m$ be a number of generators of $A$ as a module over $B[x_1,..., x_n].$ Since $A$ if a finite extension of $B[x_1,...,x_n]$ we have $n=\dim(A)-\dim(B).$
Now let $S\to k$ be a base change to an algebraically closed field of characteristic $p\gg 0$ (in particular $p>m$.)
So $A_k$ is a finite  module with at most $m$ generators over $B_k[x_1,\cdots, x_n].$
Clearly the rank of $A_k$ over $B_k[x_1^p,..., x_n^p]\subset B_kA_k^p$
is $p^nl,$ where $l\leq m$ is the rank of $A_k$ over $B_k[x_1,\cdots, x_n].$
On the other hand, it can easily be seen that if $K$ is a field of characteristic $p > 0$ which is finitely generated over a perfect subfield, then $ K / K^p$ is a finite extension and $[K : K^p]$ is a power of $p$ (by picking a transcendence basis over the perfect subfield we can reduce to the case of finite field extensions, and then use the fact that every finite extension of a perfect field is perfect). It follows that the rank $A_k$ over $A_k^p$ is a power of $p.$
So, the rank of $A_k$ over $B_kA_k^p$ is a power of $p$ and divides $p^nl$,
hence it must divide $p^n.$ However, the rank of $B_kA_k^p$ over $B_k[x_1^p,..., x_n^p]$
is at most $m.$ Thus the rank of $A_k$ over $B_kA_k^p$ is  $p^{\dim(A)-\dim(B)}.$

\end{proof}

\begin{Lemma}\label{Gr}
Let $R$ be a nonnegatively filtered commutative  algebra over a field $k$ such that 
$\gr(R)$ is a finitely generated $k$-domain.
Let $M$ be a filtered $R$-module such that $\gr(M)$ is a finitely generated
$\gr(R)$-module. Then the rank of $M$ over $R$ equals to the rank of $\gr(M)$ over $\gr(R).$
\end{Lemma}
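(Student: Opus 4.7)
Plan. The strategy is to establish both inequalities $\rank_R(M) \ge n$ and $\rank_R(M) \le n$, where $n := \rank_{\gr R}(\gr M)$, via leading-symbol analyses. I would first pick $m_1, \ldots, m_n \in M$ whose symbols $\gr(m_1), \ldots, \gr(m_n) \in \gr M$ are $\gr R$-linearly independent; by passing to homogeneous components one may further assume each $\gr(m_i)$ is homogeneous, of degree $d_i$ say. Let $v$ denote the filtration valuation on $R$ and on $M$.

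For the inequality $\rank_R(M) \ge n$, I would show that the $m_i$ themselves are $R$-linearly independent. Suppose for contradiction that $\sum r_i m_i = 0$ with not all $r_i$ equal to zero, and set $e := \max_{r_i \ne 0}(v(r_i) + d_i)$. Reducing the relation modulo $F_{e-1}M$ yields $\sum_{v(r_i)+d_i = e}\gr(r_i)\gr(m_i) = 0$ in $\gr(M)_e$, and the $\gr R$-linear independence of the $\gr(m_i)$ forces $\gr(r_i) = 0$, i.e., $r_i = 0$, for each $i$ in the maximizing set, contradicting the choice of $e$.

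For the reverse inequality $\rank_R(M) \le n$, I would show every $m_0 \in M$ lies in the $\Frac(R)$-span of the $m_i$. Since $\gr(m_0)$ lies in the $\Frac(\gr R)$-span of $\gr(m_1), \ldots, \gr(m_n)$, one can after clearing denominators and a torsion-clearing factor produce homogeneous $a, a_1, \ldots, a_n \in \gr R$ with $a \ne 0$ and $a\gr(m_0) = \sum a_i \gr(m_i)$ in $\gr M$. Lifting to $\tilde a, \tilde a_i \in R$ and setting $m_0' := \tilde a\, m_0 - \sum \tilde a_i\, m_i$, the leading symbols cancel, so $v(m_0') < v(\tilde a) + v(m_0)$. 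Iterating on $m_0'$ strictly decreases the filtration degree of the remainder at each step; nonnegativity of the filtration on $R$ together with lower boundedness of the filtration on $M$ (implicit in finite generation of $\gr M$) forces termination. The upshot is a relation $A m_0 = \sum R_i m_i$ in $M$ with $A \in R \setminus \{0\}$, nonzero because $R$ is itself a domain (as $\gr R$ is), placing $m_0$ in the $\Frac(R)$-span as desired. The main obstacle is this iterated lifting: at each step one must verify that a torsion-clearing homogeneous relation for the current symbol exists in $\gr M$ and that the lifted coefficients remain nonzero in $R$, both of which hinge on $\gr R$ being a domain.
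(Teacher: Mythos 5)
Your proof of the lower bound $\rank_R(M)\geq n$ is sound: lifting a $\gr R$-independent family of homogeneous symbols and running a leading-term argument on a putative dependence relation is exactly the standard device and it works as you describe. The upper bound, however, contains a genuine gap at the termination step. You correctly deduce $v(m_0')<v(\tilde a)+v(m_0)$, but then assert that iterating ``strictly decreases the filtration degree of the remainder at each step.'' This does not follow: the inequality gives $v(m_0')\leq v(\tilde a)+v(m_0)-1$, which is a strict drop in $v(m_0')$ relative to $v(m_0)$ only when $v(\tilde a)=0$. That case occurs only when $\gr(m_0)$ already lies in the $\gr R$-span (not merely the $\Frac(\gr R)$-span) of the $\gr(m_i)$, and this has no reason to hold at a general step. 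Whenever the homogeneous ``denominator'' $a$ has positive degree, $v(m_0')$ can equal or even exceed $v(m_0)$, so the division process need not terminate and lower-boundedness of the filtration on $M$ does not come to the rescue. This is not a cosmetic issue: the termination is exactly where the content of the lemma lives.

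The clean ways to close the gap are global rather than step-by-step. One is dimension-theoretic: put $N=\sum_i Rm_i$ and observe that $\gr(M/N)$ is a finitely generated torsion $\gr R$-module, hence has Krull dimension strictly less than $\dim\gr R$; since the filtration on $M/N$ is good, $\dim(M/N)=\dim\gr(M/N)<\dim R$, so $M/N$ is torsion over the domain $R$ and therefore dies after tensoring with $\Frac(R)$. The other is the Rees-algebra/flatness argument: form $\widetilde R=\bigoplus_n F_nR\cdot t^n$ and $\widetilde M=\bigoplus_n F_nM\cdot t^n$, note that $\widetilde M$ is finitely generated over $\widetilde R$ and $t$-torsion-free (hence flat over $k[t]$), and compare the fibres at $t=0$ (giving $\gr M$ over $\gr R$) and $t=1$ (giving $M$ over $R$). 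The paper itself does not supply a proof but cites \cite[Lemma 2.3]{Tik11} and, for the noncommutative generalisation, \cite[Lemma 6.2]{Gor07}; both of those references argue along one of these global lines rather than via an elementwise division algorithm.
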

For a proof of Lemma \ref{Gr} see for example \cite[Lemma 2.3]{Tik11}. For a more general
result covering noncommutative algebras see \cite[Lemma 6.2]{Gor07}. 
We record one final useful lemma which is needed to complete the proof of Theorems~\ref{main} and \ref{center}.
\begin{Lemma}
\label{equivalentforms}
If $\g$ is a restricted Lie algebra over an algebraically closed field $k$, then the following are equivalent:
\begin{enumerate}
\item[(i)] the first Kac--Weisfeiler conjecture holds for $\g$;
\item[(ii)] the rank of $U(\g)$ over $Z(\g)$ is at most $p^{\dim \g - \ind \g}$;
\item[(iii)] the rank of $Z(\g)$ over $Z_p(\g)$ is at least $p^{\ind(\g)}$.
\end{enumerate}
\end{Lemma}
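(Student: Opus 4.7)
The plan is to translate the three conditions into linear-algebraic statements about the skew field of fractions $D(\g) := \Frac(U(\g))$, which exists because $U(\g)$ is a Noetherian Ore domain (being finite over the polynomial ring $Z_p(\g)$). Write $K := \Frac(Z(\g))$ and $K_0 := \Frac(Z_p(\g))$. Since $U(\g)$ is finite over its centre it is a PI ring, so Posner's theorem identifies the centre of $D(\g)$ with $K$, and Zassenhaus' theorem \cite{Zas54} yields
\begin{equation*}
M(\g)^2 \;=\; [D(\g) : K].
\end{equation*}

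For (i) $\Leftrightarrow$ (ii), I would first show that $[D(\g) : K]$ coincides with the rank of $U(\g)$ as a $Z(\g)$-module. Since $U(\g)$ is finitely generated over $Z_p(\g) \subseteq Z(\g)$, localizing at $S := Z(\g) \setminus \{0\}$ produces a ring $S^{-1} U(\g)$ which is finite-dimensional over $K$ and is a domain (as a subring of $D(\g)$), hence a division ring; by the universal property of $D(\g)$ it must equal $D(\g)$. Therefore $\rank_{Z(\g)} U(\g) = \dim_{K} D(\g) = M(\g)^2$. Condition (ii) then reads $M(\g) \leq p^{(\dim \g - \ind \g)/2}$, and combining with the Premet--Skryabin lower bound $M(\g) \geq p^{(\dim \g - \ind \g)/2}$ of \cite[\textsection 5.4]{PS99} --- already invoked in the main text immediately after Theorem~\ref{mainthm} --- this is in turn equivalent to (i).

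For (ii) $\Leftrightarrow$ (iii), I would apply multiplicativity of ranks along the chain $Z_p(\g) \subseteq Z(\g) \subseteq U(\g)$. The commutative domain $Z(\g)$ is a $Z_p(\g)$-submodule of the finitely generated $Z_p(\g)$-module $U(\g)$, hence is itself finite over the Noetherian ring $Z_p(\g)$. A brief argument then shows $Z(\g) \otimes_{Z_p(\g)} K_0 = K$: this tensor product is a finite $K_0$-algebra which is a domain, so a field, sitting between $Z(\g)$ and $K$ inside $D(\g)$. Associativity of tensor products gives
\begin{equation*}
p^{\dim \g} \;=\; \rank_{Z_p(\g)} U(\g) \;=\; \bigl(\rank_{Z(\g)} U(\g)\bigr) \cdot \bigl(\rank_{Z_p(\g)} Z(\g)\bigr),
\end{equation*}
where the first equality is the PBW fact that $U(\g)$ is free of rank $p^{\dim \g}$ over $Z_p(\g)$. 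The equivalence (ii) $\Leftrightarrow$ (iii) follows at once.

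The main subtleties are conceptual rather than computational: one must invoke Posner's theorem to pass from the centre of $U(\g)$ to that of $D(\g)$, and one must check that the non-commutativity of $U(\g)$ does not obstruct the tower rule, which is the case precisely because $Z(\g)$ is central in $U(\g)$. Beyond these two points the argument is routine bookkeeping with ranks of finitely generated modules over commutative Noetherian domains.
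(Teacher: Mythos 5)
Your proof is correct and follows essentially the same route as the paper: Zassenhaus' theorem identifies $M(\g)^2$ with $[D(\g):\Frac Z(\g)]$, the Premet--Skryabin lower bound converts the equality in KW1 into the upper bound (ii), and multiplicativity of ranks along $Z_p(\g)\subseteq Z(\g)\subseteq U(\g)$ together with the PBW fact $\rank_{Z_p(\g)}U(\g)=p^{\dim\g}$ gives (ii) $\Leftrightarrow$ (iii). The paper leaves the localization and Posner-type identification of the centre of $D(\g)$ implicit in its citation of Zassenhaus, whereas you spell these out; otherwise the arguments coincide.
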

\begin{proof}
It was demonstrated in \cite[Theorem~6]{Zas54} that
$M(\g)^2$ equals the rank of $U(\g)$ over $\Frac Z(\g)$,
hence (i) holds if and only if we have equality in (ii). The inequality $M(\g)^2 \geq p^{\dim \g - \ind \g}$ was deduced in \cite[Remark~5.4]{PS99} and so (i) $\Leftrightarrow$ (ii). The equivalence (ii) $\Leftrightarrow$ (iii) follows from the fact that the rank of $U(\g)$ over $Z_p(\g)$ is $p^{\dim \g}$.
\end{proof}

\begin{proof}[Proof of Theorems \ref{main} and \ref{center}]
Let $G$ be the connected complex algebraic group corresponding to $\mathfrak{g}_\C.$
Let $m$ be the index of $\mathfrak{g}_\C.$
By Rosenlicht's theorem $\mathbb{C}(\mathfrak{g}_\C^*)^G$ has the transcendence degree $m$ over $\mathbb{C}.$
Let $f_i, g_i\in \mathbb{C}[\mathfrak{g}^*_{\mathbb{C}}]$ be elements such that
$$\left\{\frac{f_i}{g_i}\in \mathbb{C}(\mathfrak{g}_{\mathbb{C}}^*)^G\mid  1\leq i\leq m\right\}$$
are algebraically independent elements,
written as reduced fractions. We write $S_R(\g)$ for the symmetric $R$-algebra generated by the $R$-module $\g$, and identify $\C(\g_\C^*)$ with $\Frac S(\g_\C)$.
By localizing $R$ if necessary, we may assume that $f_i, g_i\in S_R(\g), 1\leq i\leq m.$
Denote by $\phi_i$ (respectively $\psi_i$) the image of $f_i$ (respectively $g_i$)
under the symmetrization map $S(\g_\C)\to U(\g_\C)$. Localizing $R$ further if necessary we can assume that $\phi_i, \psi_i$ are elements of the enveloping $R$-algebra $U_R(\g)$ (the $R$-tensor algebra of $\g$ modulo the usual relations). It is well-known that $\phi_i, \psi_i$ are semi-invariants for the action of $\g_\C$ of the same weight. After localising $R$ further we deduce that there is an $R$-linear map $\chi_i : \g \to R$ such that $\ad(x) \phi_i = \chi_i(x) \phi_i$ and $\ad(x) \psi_i = \chi_i(x) \psi_i$ for all $x \in \g$. Now if $R \to k$ is a base change to any algebraically closed field then it follows that the quotient $\frac{\phi_i \otimes 1}{\psi_i\otimes 1} \in \Frac U(\g_k)$ is central. Furthermore it follows from elementary linear algebra that $m=\ind(\mathfrak{g}_k)$ provided $\Char(k) = p \gg 0$.

Thanks to Lemma~\ref{equivalentforms}, to establish the KW1 conjecture for $\mathfrak{g}_k$ we need to show that the rank of  $U(\mathfrak{g}_k)$ over
$Z(U(\mathfrak{g}_k))$ is at most $p^{\dim\mathfrak{g}_k-m}.$
For this purpose, it is enough to show that the  rank of $S(\g_k)$ as a $\gr(Z(U(\mathfrak{g}_k))$-module is at most $p^{\dim\mathfrak{g}_k-m}$ thanks to Lemma~\ref{Gr}.
Let $z_i\in U(\mathfrak{g}_k)$ be such that $\psi_iz_i\in Z(U(\mathfrak{g}_k)).$ So $\phi_iz_i\in Z(U(\mathfrak{g}_k)).$
Now since $$\gr(\phi_i)\gr(z_i), \gr(\psi_i)\gr(z_i)\in \gr Z(U(\mathfrak{g}_k)),$$
it follows that 
\begin{eqnarray}
\label{e:fracs}
\frac{f_i \otimes 1}{g_i \otimes 1}\in \Frac( \gr Z(U(\mathfrak{g}_k))), 1\leq i\leq m.
\end{eqnarray}
If we put $A = S_R(\g)[g_1^{-1},..., g_m^{-1}]$ and $B = R[\frac{f_1}{g_1}, \cdots, \frac{f_m}{g_m}]$, then for any base change $R \to k$ we may view $A_k$ and $B_k$ as subalgebras of $\Frac S(\g_k)$. Since $\psi_i \otimes 1$ is a semi-invariant for $\ad(\g_k)$ it follows that $\psi_i^p \otimes 1 \in U(\g) \otimes_R k \cong U(\g_k)$ is actually central for $i =1,...,p$. Therefore $g_i^p \otimes 1 = \gr \psi_i^p \otimes 1 \in \gr Z(U(\g_k))$. Combining this with \eqref{e:fracs} it follows that
$\Frac(\gr Z(U(\mathfrak{g}_k))$)  contains  $A_k^pB_k$,
and so it suffices to show that the rank of $A_k$ over $A_k^p B_k$ is at most $p^{\dim\mathfrak{g}_k-m},$
which follows from  Lemma~\ref{rank}. This completes the proof of Theorem~\ref{main}.

Applying Lemma~\ref{equivalentforms}, we see that the rank of $Z(\g_k)$ over $Z_p(\g_k)$ is $p^m$ for $\Char k \gg 0$. Let $Q$ denote the subfield of $\Frac Z(\g_k)$ generated by $\Frac Z_p(\g_k)$ and the elements
$$\frac{\phi_i\otimes 1}{\psi_i \otimes 1} \text{ for } i=1,...,m.$$
The above argument actually shows that $[D(\g)  :Q] = p^{\dim \g - m}$, which implies $[Q: \Frac Z_p(\g)] = p^m$. It follows that the inclusion $Q \subseteq \Frac Z(\g_k)$ must be an equality. This completes the proof of Theorem~\ref{center}.

\end{proof}
\begin{Remark}
Let $\mathfrak{g}$ be a Lie algebra of a $\Z$-group scheme and $k$ be a field of large characteristic $p > 0$. Then it is natural to ask whether the center of $U(\mathfrak{g}_k)$ is generated by $Z_p(\mathfrak{g}_k)$ and the mod $p$ reduction of the center of $U(\g)$
(this was conjectured by Kac in \cite{KPre}). Unfortunately this is not always true: Let $\mathfrak{g}$ be a three dimensional free $\Z$-module spanned by $h, x, y$ with Lie brackets given by
$$[h, x]=nx, [h, y]=my, [x, y]=0, n, m\in\mathbb{Z}_{>0}, (n, m)=1.$$
Then the center of $U(\mathfrak{g}_k)$ is generated by $h^p-h, x^p, y^p, x^iy^j$
where $ ni+mj=0$ \ \ mod $p$ and $0<i, j<p.$ On the other hand the center of $U(\g)$ is trivial, hence $Z(U(\mathfrak{g}_k))\neq Z_p(\mathfrak{g}_k).$
Meanwhile, the center of $D(\mathfrak{g}_\C)$ is generated by the elements $x^my^{-n}$
which are also generators for the centre of $D(\mathfrak{g}_k)$ over $Z_p(\mathfrak{g}_k)$.
\end{Remark}

{\small
\bibliographystyle{amsalpha}
\bibliography{bib}}
\end{document}